\documentclass{amsart}

\usepackage{amsfonts}
\usepackage{amssymb}
\usepackage{amscd}

\newtheorem{thm}{Theorem}[section]
\newtheorem{lem}[thm]{Lemma}
\newtheorem{prop}[thm]{Proposition}

\newtheorem{cor}[thm]{Corollary}
\newtheorem{conj}[thm]{Question}
\theoremstyle{definition}
\newtheorem{dfn}[thm]{Definition}
\newtheorem{rmk}[thm]{Remark}
\newtheorem{ex}[thm]{Example}

\begin{document}

\title[Bounded modular functionals and operators with unusual kernels]{On bounded module functionals and operators on Hilbert C*-modules without biorthogonally complemented kernels}
\author{Michael Frank}
\address{Hochschule f\"ur Technik, Wirtschaft und Kultur (HTWK) Leipzig, Fakult\"at Informatik und Medien, PF 301166, D-04251 Leipzig, Germany.}
\email{michael.frank@htwk-leipzig.de}

\author{Cristian Ivanescu}
\address{MacEwan University, Department of Mathematics and Statistics, 10700 104 Avenue NW, Edmonton, AB T5J 4S2, Canada}
\email{ivanescuc@macewan.ca}

\subjclass{Primary 46L08; Secondary 46L05, 47A20, 54D15.}

\keywords{Hilbert C*-modules; extensions of zero modular functionals and modular operators; kernels of non-selfadjoint module operators}

\date{August, 2026}

\begin{abstract} 
We investigate the structural reasons behind the existence and non-existence of bounded module functionals and bounded module operators on Hilbert C*-modules whose kernels are not orthogonally complemented. The key motivating example was introduced by J.~Kaad and M.~Skeide in their 2023 paper. Building on their ideas and related developments, we obtain a structural characterization of this class of examples in a more abstract theoretical framework. We also study non-adjointable bounded module operators associated with such module functionals, together with the properties of the corresponding operator algebras and operator spaces. Our results provide a deeper understanding of phenomena beyond the adjointable setting, place several existing partial results into a unified framework, and show how large families of further examples can be constructed systematically.
\end{abstract}

\maketitle

\section{Introduction}

Since 2015, there have been considerable discussions about the possible existence of non-regular bounded modular functionals on certain Hilbert C*-modules: for two Hilbert C*-modules ${\mathcal M} \subset \mathcal N$ over some fixed C*-algebra $A$ with ${\mathcal M}^\bot = \{ 0 \}$, does there exist a non-zero bounded $A$-linear map $r: {\mathcal N} \to A$ such that $\mathcal M$ belongs to the kernel of $r$, or not. The problem has been raised by O.~M.~Shalit and M.~Skeide in \cite[Quest. 10.6, 10.7]{Shalit_Skeide} in 2003/2020/2023, pointing out important open problems in Hilbert product systems embedding research in semi-group contexts. Already in 2015 the problem was mentioned in \cite[p.~1546, Footnote 3]{BhSk_2015}. In \cite[Lemma 2.4]{Frank_2002}, the first author stated the identity of the biorthogonal complement of the kernels of general bounded Hilbert module operators, however the proof presented there is insufficient. D.~Baki\'c and B.~Gulja{\v{s}} gave a negative answer for ideal submodules in \cite{BG_2002} in 2002. A first affirmative example was constructed by J.~Kaad and M.~Skeide \cite{KaSk} in 2021. Further affirmative examples were announced by M.~V.~Manuilov in \cite{M_2022} in 2024. M.~Skeide found a negative answer for closed ternary ideals in \cite{Sk_2022} in 2022. Later, one of the authors of the present paper was able to show the non-existence of such bounded modular functionals in the described situations for von Neumann and monotone complete C*-algebras of coefficients in \cite{Frank_2024}, in contrast to the examples in \cite{M_2022}. This saved \cite[Lemma 2.4]{Frank_2002} for von Neumann and monotone complete C*-algebras of coefficients, and hence, the content of that paper \cite{Frank_2002}. However, the structural background of the appearing counterexamples is not well understood at present. Moreover, for Hilbert C*-modules and their multiplier modules a negative answer was found in \cite{Frank_2026}, where, the more specifically, not each bounded module map of the smaller Hilbert C*-module to $A$ can be continued to a bounded module map of its multiplier module to the multiplier algebra $M(A)$ of $A$, in cases, cf.~\cite{Frank_2026}. 

From a categorical point of view, the equivalent question is whether adequate bounded modular functionals on Hilbert C*-modules always separate Hilbert C*-submodules in Hilbert C*-modules, thereby making them distinguishable. From another point of view, one asks for non-trivial extensions of the zero modular functional from Hilbert C*-submodules to hosting Hilbert C*-modules. Also, there is an equivalent formulation of that problem in terms of the existence of non-regular bounded modular operators on the hosting Hilbert C*-module, cf.~\cite[Prop.~4.2, Prop.~4.3]{Frank_2024} and \cite{KaSk}. Note, that both these questions below have the answer `never' for adjointable bounded C*-linear operators by Lemma \ref{case_selfadjoint}, due to their biorthogonally complemented kernels.

\begin{conj} \label{conj_1}
Let $A$ be a C*-algebra and let $\mathcal M$ be a Hilbert $A$-submodule of a Hilbert $A$-module $\mathcal N$ such that the orthogonal complement of $\mathcal M$ relative to $\mathcal N$ is trivial,  i.e. equals $\{0\}$. Under which conditions does there exist any non-trivial bounded $A$-linear map from $\mathcal N$ to $A$ such that is equal to the zero map on $\mathcal M$? 
\end{conj}
 
\begin{conj} \label{conj_2}
Let $A$ be a C*-algebra and let $\mathcal M$ be a Hilbert $A$-submodule of a Hilbert $A$-module $\mathcal N$ such that the orthogonal complement of $\mathcal M$ relative to $\mathcal N$ is trivial,  i.e. equals $\{0\}$. Under which conditions does there exist any non-trivial bounded $A$-linear map from $\mathcal N$ to $\mathcal N$ such that is equal to the zero map on $\mathcal M$? 
\end{conj}

In the case of essential norm-closed ideals of commutative C*-algebras a central role is played by Urysohn's lemma (\cite[Thm.~2.12]{R}) and its extension to a dichotomy. Consider a unital commutative C*-algebra $A=C(K)$, where $K$ is a compact Hausdorff space, and a norm-closed ideal $I \subset A$. The ideal $I$ is characterizable by a subset $L \subset K$ of points $t$ of $K$ such that at least one element $i \in I$ evaluated at $t \in L$ is unequal to zero. The Urysohn's lemma states that for a normal topological space $X$ and for two disjoint closed sets $L_1$, $L_2$ of it, there exists a continuous function $f$ on $X$ such that $f|_{L_1} = 0$, $f|_{L_2} =1$ and $0 \leq f(t) \leq 1$ for $t \in X$. So, Question \ref{conj_1} on norm-closed ideals $I$ such that $I^\bot = \{ 0 \}$ translates into the assumption that $K \setminus L$ does not contain any closed set disjoint from the closure of $L$ in $K$, even not a single point. The closure of $L$ should be equal to $K$. In algebraic terms, we can rephrase this situation requiring that the ideal $I$ has to be an essential ideal of $A$, i.e.~$ai=0$ for any $i \in I$ and a certain $a \in A$ forces $a=0$. (In fact, $a$ may be even selected from the multiplier algebra $M(I)$.) We can consider Question \ref{conj_1} as an extension of the assertion of Urysohn's lemma: For a given closed subset $L_1$ of a compact Hausdorff space $K$ either there exists another non-trivial disjoint closed subset $L_2 \subset K$ or $L_1=K$. In the case of essential ideals $I \subset A$, the closure of the set $L$, which corresponds to $I$, is equal to $K$.

Considering commutative non-unital C*-algebras $A=C_0(K)$, where $K$ is merely a locally compact Hausdorff space, and essential ideals $I$ of them we can switch to the unitization $A_1$ of $A$, i.e.~to the one-point compactification of $K$. Again, the ideal $I$ is characterizable by a subset $L \subset K$ of points $t$ of $K$ such that at least one element $i \in I$ evaluated at $t \in L$ is unequal to zero. Then $I$ is still an essential ideal in $A_1$. The switch to the unitization $A_1$ of $A$ would be useful in some cases because of the existence of non-normal locally compact Hausdorff spaces like the Tychonoff plank, cf.~\cite{StSe}. A variant of Urysohn's lemma for locally compact Hausdorff spaces, even non-normal, can be found in the book by W.~Rudin, cf.~\cite[Thm.~2.12]{R}. There we can rely on the complete regularity property: a topological Hausdorff space is completely regular if for every closed set $F$ and for every single point $x$ outside $F$ there exists a continuous real-valued function $f$ on the space with values in $[0,1]$ such that $f$ equals zero on $F$ and equals one on $x$. Since locally compact Hausdorff spaces are completely regular (\cite[17.14d, p.~460]{Sch}, \cite{Wiki}), the existence of just one point outside the closure of $L \subset K$ would give a continuous function on $K$ such that it is equal to zero on the closure of $L$ and equal to one on that point, contradicting the essentiality property of $I$. So the closure of $L$ relative to $K$ has to be equal to $K$, again.

In the non-commutative case, the consideration of two-sided essential ideals of C*-algebras is similar, however the canonically generalized objects would be {one-sided} norm-closed ideals with the module action from one side (the ideal side), but with the modular operator action from the (not specified) other side. So, the situation has to be investigated for general Hilbert C*-modules, in the commutative case as well.


\section{Preliminaries}

Let $A$ be a C*-algebra. We consider Hilbert C*-modules $\mathcal K$ as left modules over $A$ with a module action compatible with both the complex linear space structures. Hilbert C*-modules possess an $A$-valued inner product $\langle \cdot , \cdot \rangle$ $A$-linear in the first entry, positive in the sense that $\langle x, x\rangle \geq 0$ for all $x$, and $\langle x, x \rangle=0$ implies $x=0$. In addition, the $A$-valued inner product is
 compatible with the involution on $A$, interchanging the two entries. We always assume that $\mathcal K$ is complete with respect to the norm defined by $\| \cdot \| = \| \langle \cdot , \cdot \rangle \|^{1/2}$. Usually, one has to consider a Hilbert C*-module as a pair $\{ {\mathcal K}, \langle \cdot , \cdot \rangle \}$ because there are examples with two different $A$-valued inner products inducing equivalent norms, but which are not unitarily isomorphic as $A$-valued inner products, cf.~\cite{F_1999_1}. However, for our purposes, just the existence of an $A$-valued inner product is sufficient. 

Bounded C*-linear operators are thought of as acting from the right in the sense that they commute with the left module action. Instructive examples are left {norm-complete} essential ideals $I$ of C*-algebras $A$ for which any bounded module operator is realized as a right multiplication by a right multiplier of $A$. The operator norm is defined as usual for bounded linear operators. A bounded (C*-linear) operator $T$ from a Hilbert C*-module ${\mathcal X}$ to a Hilbert C*-module ${\mathcal Y}$ is called adjointable if the equality $\langle T(x),y \rangle = \langle x,T^*(y) \rangle$ is valid for some bounded C*-linear operator $T^*: {\mathcal Y} \to {\mathcal X}$ (the adjoint operator of $T$) and any $x \in \mathcal X$, $y \in \mathcal Y$. Note, that boundedness and adjointability already imply C*-linearity. Cf.~\cite[Section 3]{Lance} for complementary discussions.

\begin{lem} \label{case_selfadjoint}
Let $A$ be a C*-algebra and let ${\mathcal X}$ and ${\mathcal Y}$ be Hilbert C*-modules. For every adjointable bounded $A$-linear operator
$T:{\mathcal X}\to{\mathcal Y}$, exactly one of the following three situations occurs:
\begin{itemize}
\item[(i)]
The kernel ${\rm Ker}(T)$ of $T$ is biorthogonally complemented, its orthogonal complement is equal to the biorthogonal closure ${\rm Im}(T^*)^{\perp\perp}$ of the range of $T^*$, and the orthogonal direct sum
\[
{\rm Ker}(T)\oplus {\rm Im}(T^*)^{\perp\perp} \subset \mathcal X
\]
is a proper Hilbert $A$-submodule of ${\mathcal X}$.

\item[(ii)]
The kernel ${\rm Ker}(T)$ of $T$ is biorthogonally complemented, its orthogonal complement is equal to the norm closure $\overline{{\rm Im}(T^*)}$, the orthogonal direct sum
\[
{\rm Ker}(T)\oplus \overline{{\rm Im}(T^*)} = \mathcal X
\]
equals ${\mathcal X}$, but the range ${\rm Im}(T^*)$ is not norm closed.

\item[(iii)]
The kernel ${\rm Ker}(T)$ of $T$ is biorthogonally complemented, its orthogonal complement is equal to the range ${\rm Im}(T^*)$ of $T^*$, and
\[
{\rm Ker}(T)\oplus {\rm Im}(T^*) ={\mathcal X} \, .
\]
\end{itemize}
Only in cases (ii) and (iii) $T$ admits a polar decomposition.
\end{lem}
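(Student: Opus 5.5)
The plan is to first establish the common core of all three cases: ${\rm Ker}(T) = {\rm Im}(T^*)^{\perp}$. After that, I will sort out how ${\rm Ker}(T)$ and the closure of ${\rm Im}(T^*)$ sit inside $\mathcal X$. The key identity is $\langle T(x),y\rangle = \langle x, T^*(y)\rangle$ for all $x \in \mathcal X$, $y \in \mathcal Y$.

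If $T(x)=0$, then $x$ is orthogonal to every $T^*(y)$. Conversely, if $x \perp {\rm Im}(T^*)$, then $\langle T(x),T(x)\rangle = \langle x, T^*T(x)\rangle = 0$, so $T(x)=0$. Hence ${\rm Ker}(T) = {\rm Im}(T^*)^\perp$.

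Since $S^{\perp\perp\perp} = S^\perp$ holds for any subset $S$, we get ${\rm Ker}(T)^{\perp\perp} = {\rm Im}(T^*)^{\perp\perp\perp} = {\rm Im}(T^*)^\perp = {\rm Ker}(T)$. So the kernel is always biorthogonally complemented (biorthogonally closed). Its orthogonal complement is ${\rm Ker}(T)^\perp = {\rm Im}(T^*)^{\perp\perp}$. This biorthogonal closure contains $\overline{{\rm Im}(T^*)}$, and the two have trivial intersection with ${\rm Ker}(T)$. This gives the assertion common to (i)--(iii): the orthogonal sum ${\rm Ker}(T)\oplus {\rm Im}(T^*)^{\perp\perp}$ is a well-defined Hilbert submodule of $\mathcal X$. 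It is closed, because it is an orthogonal sum of two closed orthogonal submodules.

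The trichotomy is then a case split on whether this sum exhausts $\mathcal X$.

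\emph{Case (i).} The sum is proper. Nothing further is needed, beyond recording the orthogonal complement identity above.

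\emph{Otherwise.} Here ${\rm Ker}(T)$ is orthogonally complemented with complement ${\rm Im}(T^*)^{\perp\perp}$. I would show that in this situation ${\rm Im}(T^*)^{\perp\perp} = \overline{{\rm Im}(T^*)}$. Let $P$ be the projection onto ${\rm Ker}(T)^\perp$; it is adjointable. For $z \in {\rm Ker}(T)^\perp$, approximate $z$ using $T^*T$: with $z_\varepsilon = T^*T(T^*T+\varepsilon)^{-1}z$, working in the C*-algebra of adjointable operators, one gets $z_\varepsilon \to z$. This should follow because $(1-T^*T(T^*T+\varepsilon)^{-1})$ converges strictly to the kernel projection of $T^*T$ when that projection exists. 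The kernel projection of $T^*T$ is $1-P$, since ${\rm Ker}(T^*T) = {\rm Ker}(T)$.

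This approximation step is the main obstacle. I would try first the standard route in \cite[Section 3]{Lance}: prove $\|\varepsilon(T^*T+\varepsilon)^{-1} z\| \to 0$ for $z$ in $\overline{{\rm Im}(T^*T)}$, and then show ${\rm Ker}(T)^\perp = \overline{{\rm Im}(T^*T)}$ using complementability. The latter gives $\overline{{\rm Im}(T^*)} = \overline{{\rm Im}(T^*T)} = {\rm Ker}(T)^\perp$.

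With this in hand, cases (ii) and (iii) are separated by whether ${\rm Im}(T^*)$ is norm closed. The three cases are mutually exclusive by construction.

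For the final claim about polar decomposition, I would use the known criterion (Wegge-Olsen / Lance, Prop.~15.3.7 type): an adjointable $T$ has a polar decomposition if and only if $\overline{{\rm Im}(T)}$ and $\overline{{\rm Im}(T^*)}$ are orthogonally complemented.

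\emph{Case (i).} $\overline{{\rm Im}(T^*)}$ is not complemented, since its biorthogonal closure together with the kernel does not fill $\mathcal X$. So there is no polar decomposition.

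\emph{Cases (ii) and (iii).} Complementability of $\overline{{\rm Im}(T^*)}$ holds. For $\overline{{\rm Im}(T)}$ the statement as given is delicate. I would argue as follows: the partial isometry from the polar decomposition of $|T|$ is defined on $\overline{{\rm Im}(|T|)} = \overline{{\rm Im}(T^*)}$ and maps it isometrically onto $\overline{{\rm Im}(T)}$. This yields the decomposition when $\overline{{\rm Im}(T)}$ is complemented in $\mathcal Y$; that needs to be checked or added as a hypothesis in the writeup.
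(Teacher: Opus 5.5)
Your opening steps are correct: both inclusions of ${\rm Ker}(T)={\rm Im}(T^*)^\perp$, biorthogonal closedness, closedness of the orthogonal sum, and mutual exclusivity. The ``Otherwise'' branch, however, rests on a false claim, not merely a hard one. Orthogonal complementability of ${\rm Ker}(T)$ does not give ${\rm Im}(T^*)^{\perp\perp}=\overline{{\rm Im}(T^*)}$.

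Take $A=\mathcal X=\mathcal Y=C([0,1])$ and let $T$ be the self-adjoint multiplication by $g(t)=t$. Then ${\rm Ker}(T)=\{0\}$, so ${\rm Ker}(T)\oplus{\rm Im}(T^*)^{\perp\perp}=\{0\}\oplus\mathcal X=\mathcal X$. On the other hand, $\overline{{\rm Im}(T^*)}=\{f : f(0)=0\}\neq\mathcal X={\rm Ker}(T)^\perp$.

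For $z=1$ your approximants give $z-z_\varepsilon=\varepsilon(t^2+\varepsilon)^{-1}$, which has norm $1$ for every $\varepsilon>0$. So $\varepsilon(T^*T+\varepsilon)^{-1}$ does not converge strictly to the kernel projection (here $0$), even though that projection exists. In general, $\varepsilon(T^*T+\varepsilon)^{-1}z$ converges exactly when $z\in{\rm Ker}(T)\oplus\overline{{\rm Im}(T^*)}$, because a limit $w$ satisfies $T^*T(w)=0$ and $z-w\in\overline{{\rm Im}(T^*T)}$. Using it to prove that this sum is $\mathcal X$ is therefore circular. Your fallback, ``${\rm Ker}(T)^\perp=\overline{{\rm Im}(T^*T)}$ by complementability'', is precisely the false claim.

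This operator satisfies none of (i)--(iii), so ``exactly one of the three situations'' cannot be proved as stated. Either a fourth case is needed (${\rm Ker}(T)$ orthogonally complemented but $\overline{{\rm Im}(T^*)}\subsetneq{\rm Ker}(T)^\perp$), or (i) should read ``${\rm Ker}(T)\oplus\overline{{\rm Im}(T^*)}$ is a proper submodule''. With that second split your argument closes without any approximation. If $\mathcal X={\rm Ker}(T)\oplus\overline{{\rm Im}(T^*)}$, then ${\rm Ker}(T)^\perp=\overline{{\rm Im}(T^*)}$, since $\overline{{\rm Im}(T^*)}\subseteq{\rm Ker}(T)^\perp$ and ${\rm Ker}(T)\cap{\rm Ker}(T)^\perp=\{0\}$.

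For comparison, the paper's proof never addresses exhaustiveness. It cites Wegge-Olsen's Theorems 15.3.7 and 15.3.8 for (ii) and (iii), checks only the inclusion ${\rm Ker}(T)\subseteq{\rm Im}(T^*)^\perp$, and gives an example of (i). Your reverse inclusion via $\langle T(x),T(x)\rangle=\langle x,T^*T(x)\rangle$ is exactly the part it leaves out. So you have run into the same hole as the paper, but you located it explicitly.

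On the polar decomposition: ``only in cases (ii) and (iii)'' asserts necessity, and you get that directly. A polar decomposition makes $\overline{{\rm Im}(T^*)}$ an orthogonal summand, so $\mathcal X={\rm Ker}(T)\oplus\overline{{\rm Im}(T^*)}$, which forces (ii) or (iii). Your concern about $\overline{{\rm Im}(T)}$ being complemented in $\mathcal Y$ is justified, but it bears only on the converse, which the statement does not claim.
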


\begin{proof}
Assertions (ii) and (iii) follow from
\cite[Thms.~15.3.7 and 15.3.8]{NEWO}, where the existence of the polar decomposition is also established.
To prove (i), let 
\linebreak[4]
$x \in {\rm Ker}(T)$. Then, for every $y \in \mathcal Y$,
\[
0 = \langle y,T(x)\rangle =\langle T^*(y),x\rangle \, ,
\]
which shows that ${\rm Ker}(T)= {\rm Im}(T^*)^\perp$. Consequently, ${\rm Ker}(T)$ is biorthogonally complemented and
\[
{\rm Ker}(T)^\perp = {\rm Im}(T^*)^{\perp\perp} \, .
\]

To see that situation (i) may indeed occur, let $A=C([0,1])$ and identify
\linebreak[4]
$A=\mathcal X=\mathcal Y$ with standard Hilbert $A$-module structures. Consider the self-adjoint multiplication operator $T_g(f)(t)=g(t)f(t)$, where
\[
g(t)=
\begin{cases}
        1-2t, & 0\le t\le \frac12 \, ,\\
         0, & \frac12\le t\le 1 \, .
\end{cases}
\]
Then
\[
{\rm Ker}(T_g) = \{f\in C([0,1]):f|_{[0,1/2]}=0\} \, ,
\]
while
\[
{\rm Im}(T_g) = \{f\in C([0,1]):f|_{[1/2,1]}=0\} \, .
\]
Both ideals are biorthogonally complemented. However,
\[
{\rm Ker}(T_g)\oplus {\rm Im}(T_g) = \{f\in C([0,1]):f(1/2)=0\} \subset A
\]
which is a proper ideal of $A$. Therefore, case (i) can occur.
\end{proof}

\section{On the regularity of bounded modular functionals and operators}

The aim of this section is to give more insight into the background of these two Questions \ref{conj_1}, \ref{conj_2}. We build on the results known for von Neumann algebras $A$ and for analogous pairs of Hilbert $A$-modules ${\mathcal M} \subseteq \mathcal N$ worked out in \cite[Thm.~3.8, Prop.~4.2, Prop.~4.3]{Frank_2024}. For such settings, the questions are known to have a negative answer. We aim to use the universal $*$-representation of C*-algebras $A$ in their respective bidual von Neumann algebra $A^{**}$, together with a tensor product extension construction of Hilbert $A$-modules to canonically related Hilbert $A^{**}$-modules invented by W.~L.~Paschke in \cite{Paschke}. This will give a basis to apply the known result for the von Neumann case to the general setting.

There is one circumstance to pay attention to. We show the phenomenon by example.

\begin{ex} \label{ex-max-ideal}
Let $A$ be a C*-algebra, and $I$ be a maximal norm-closed one-sided ideal in $A$. Maximal one-sided (say, left) norm-closed ideals of C*-algebras $A$ are in one-to-one correspondence to minimal projections $p$ in their bidual von Neumann algebra $A^{**}$, identifying $I \subset A$ by $I=A(1-p) \subset A \subseteq A^{**}$. Obviously, these projections belong to the type I part of $A^{**}$.  Now, consider $I$ as a left Hilbert $A$-submodule of the Hilbert $A$-module $A$, and its extension $A^{**}(1-p)$ as a left Hilbert $A^{**}$-submodule of the Hilbert $A^{**}$-module $A^{**}$. The orthogonal complement of $I$ with respect to $A$ is $\{ 0 \}$. However, the orthogonal complement of $I \subset A^{**}(1-p)$ with respect to $A^{**}$ is $A^{**}p$, i.e. it is non-trivial. So, the canonical extension from a pair of Hilbert C*-modules ${\mathcal M} \subseteq \mathcal N$ with ${\mathcal M}^\bot = \{ 0 \}$ to a canonical pair of Hilbert $A^{**}$-modules following W.~L.~Paschke's tensor product construction in \cite{Paschke} might not preserve the trivial orthogonal complement property of the smaller module with respect to the larger module. Note, that the right action of $p$ on $A$ or on $A^{**}$ can be considered as an action of a bounded module operator. However, the Questions \ref{conj_1}, \ref{conj_2} above have a negative answer by \cite[Prop.~4.2, Prop.~4.3, Thm.~6.1]{Frank_2024}.

The trivial orthogonal complement property might also be preserved in cases. Let $A$ be a von Neumann algebra and consider ${\mathcal M} = \ell_2(A)$ and ${\mathcal N}=\ell_2(A)'$, which are both Hilbert $A$-modules by \cite[Thm.~3.2]{Paschke}, the latter self-dual. Obviously, $\ell_2(A)^\bot = \{ 0 \}$ with respect to $\ell_2(A)'$. Then still $\ell_2(A^{**})^\bot = \{ 0 \}$ with respect to $\ell_2(A^{**})'$. 
\end{ex}

For Hilbert W*-modules, the Questions \ref{conj_1} and \ref{conj_2} have a negative answer:

\begin{lem} \label{thm_von_N} {\rm (Cf.~\cite[Thm.~3.8, Prop.~4.2, Prop.~4.3]{Frank_2024}.)} \newline
Let $A$ be a W*-algebra. Let $\mathcal M$ be a Hilbert $A$-submodule of a Hilbert $A$-module $\mathcal N$ such that the orthogonal complement of $\mathcal M$ relative to $\mathcal N$ is trivial,  i.e. equals $\{0\}$. Then there does not exist any non-trivial bounded $A$-linear map from $\mathcal N$ to $A$ such that it equals the zero map on $\mathcal M$. Similarly, there does not exist any non-trivial bounded $A$-linear map from $\mathcal N$ to $\mathcal N$ such that it equals the zero map on $\mathcal M$. 
\end{lem}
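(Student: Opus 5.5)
The strategy is to pass to self-dual completions, where every bounded module map becomes regular, and then use that orthogonal complements detect the relevant submodules there. Since $A$ is a W*-algebra, by Paschke's results \cite{Paschke} every Hilbert $A$-module $\mathcal N$ has a self-dual completion $\mathcal N'$, the module of all bounded $A$-linear maps $\mathcal N\to A$. The $A$-valued inner product extends to $\mathcal N'$, and every bounded $A$-linear functional on $\mathcal N$ is of the form $x\mapsto\langle x,\phi\rangle$ for a unique $\phi\in\mathcal N'$. So a bounded $A$-linear map $r:\mathcal N\to A$ vanishing on $\mathcal M$ is given by some $\phi\in\mathcal N'$ with $\langle m,\phi\rangle=0$ for all $m\in\mathcal M$. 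The goal becomes: $\mathcal M^\perp=\{0\}$ in $\mathcal N$ forces $\mathcal M^\perp=\{0\}$ in $\mathcal N'$.

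\textbf{Functionals.} I will use the W*-structure of $\mathcal N'$. Paschke shows that $\mathcal N'$ is a dual Banach space and that the inner product is separately weak$^*$-continuous. Let $\overline{\mathcal M}^{w}$ be the weak$^*$ (equivalently, strict or $\tau_1$-) closure of $\mathcal M$ in $\mathcal N'$. This is a self-dual, hence orthogonally complemented, submodule (\cite[Thm.~3.8]{Frank_2024}), so $\mathcal N'=\overline{\mathcal M}^{w}\oplus\mathcal P$, and $\phi\perp\mathcal M$ means exactly $\phi\in\mathcal P$. Let $P$ be the orthogonal projection onto $\mathcal P$.

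The main obstacle is showing $\mathcal P=\{0\}$ from $\mathcal M^\perp\cap\mathcal N=\{0\}$, since $\mathcal P$ need not a priori meet $\mathcal N$. Here is the argument I would try first. Suppose $\mathcal P\neq 0$. Then $P$ is a nonzero adjointable projection on $\mathcal N'$. The module $\mathcal N$ is weak$^*$-dense in $\mathcal N'$, because its inner products $\langle x,y\rangle$ generate a weak$^*$-dense ideal. Hence $P(x)\neq 0$ for some $x\in\mathcal N$, and $P(x)$ lies in $\mathcal N'$ but perhaps not in $\mathcal N$. To get back into $\mathcal N$, I would use the $A$-action and the structure of $\langle\mathcal N,\mathcal N\rangle$. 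Pick $y\in\mathcal N$ and consider elements of the form $\langle P(x),y\rangle y'$, or more generally the "rank one" operators $\theta_{y',y}$. These map $\mathcal N'$ into $\mathcal N$, and $\theta_{y',y}\circ P$ sends $x$ into $\mathcal N$.

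This does not obviously land in $\mathcal P$, so instead I will test orthogonality directly. If every $z\in\mathcal N$ satisfies $\langle z,\mathcal M\rangle=0\Rightarrow z=0$, then for $\phi\in\mathcal P$ and $z\in\mathcal N$ I compute $\langle z,\phi\rangle$. Approximating $\phi$ weak$^*$ by a bounded net from $\mathcal N$ decomposed as $\overline{\mathcal M}^w\oplus\mathcal P$ should yield elements of $\mathcal N$ orthogonal to $\mathcal M$. In the cited source this step is handled via the monotone completeness of $A$: one takes suprema of the increasing nets $\langle\phi,m_\alpha\rangle\langle m_\alpha,\phi\rangle$ over a frame-like approximate unit of $\mathcal M$. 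I expect to invoke \cite[Thm.~3.8]{Frank_2024} for exactly this.

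\textbf{Operators.} This follows from the functional case. Let $T:\mathcal N\to\mathcal N$ be bounded, $A$-linear, vanish on $\mathcal M$, and be nonzero, so $T(x)\neq0$ for some $x$. Then for every $y\in\mathcal N$ the map $r_y(z)=\langle T(z),y\rangle$ is a bounded $A$-linear functional on $\mathcal N$ vanishing on $\mathcal M$. By the functional case $r_y=0$ for all $y$. Taking $y=T(x)$ gives $\langle T(x),T(x)\rangle=0$, so $T(x)=0$, a contradiction. Hence $T=0$, which matches \cite[Prop.~4.2, 4.3]{Frank_2024}.
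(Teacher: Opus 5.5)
Your reduction of the operator statement to the functional one is correct and self-contained. If $T$ vanishes on $\mathcal M$, then each $z\mapsto\langle T(z),y\rangle$ is a bounded $A$-linear functional vanishing on $\mathcal M$, and taking $y=T(x)$ gives $T(x)=0$.

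The functional case, however, is not proved. Everything up to ``$\phi\perp\mathcal M$ iff $\phi\in\mathcal P$'' is a reformulation. The implication $\mathcal N\cap\mathcal P=\{0\}\Rightarrow\mathcal P=\{0\}$, which you call the main obstacle, is the entire content of the lemma. You finally hand it to \cite[Thm.~3.8]{Frank_2024}, and that is exactly what the paper does: it states the lemma with this citation and gives no argument. So your weak$^*$ set-up does not add a proof.

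The heuristics you offer for this step also fail:
\begin{itemize}
\item $P(x)$, and the $\mathcal P$-components of any approximating net from $\mathcal N$, lie in $\mathcal N'$, not in $\mathcal N$. They therefore never produce an element of $\mathcal N\cap\mathcal M^\perp$.
\item $\theta_{y',y}\circ P$ lands in $\mathcal N$ but not in $\mathcal P$, as you note yourself.
\item $\langle\phi,m_\alpha\rangle\langle m_\alpha,\phi\rangle=0$ for every $\alpha$, because $\phi\perp\mathcal M$. The proposed supremum only re-proves $\phi\perp\overline{\mathcal M}^{w}$, which you already had.
\end{itemize}

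More fundamentally, weak$^*$-density of $\mathcal N$ in $\mathcal N'$ together with $\mathcal N\cap\mathcal P=\{0\}$ does not force $\mathcal P=\{0\}$. Take Example \ref{ex-max-ideal} with $A=C([0,1])$ and $I=\{f\in A: f(1/2)=0\}$. Then $A$ is weak$^*$-dense in the self-dual $A^{**}$-module $A^{**}$. The weak$^*$-closure of $I$ is $A^{**}(1-p)$, where $p$ is the minimal projection of the point $1/2$, so its complement is $A^{**}p\neq\{0\}$. Yet $A\cap A^{**}p=\{0\}$, since a continuous function vanishing off a single point is zero.

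What is absent there, and what the W*-hypothesis must supply, is that $\mathcal N$ is a module over the W*-algebra itself. This lets you cut elements of $\mathcal N'$ down by elements of $A$ (spectral projections, bounded Borel functional calculus) and bring them back into $\mathcal N$.

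One possible route is to show that $A\phi\cap\mathcal N\neq\{0\}$ for every nonzero $\phi\in\mathcal N'$. For $\mathcal N=\ell_2(A)$ with $A$ $\sigma$-finite, a noncommutative Egorov theorem applied to the tails $\sum_{i>n}\phi_i\phi_i^*\searrow 0$ plausibly yields a projection $q\in A$ with $0\neq q\phi\in\ell_2(A)$. The general case would still need a reduction to this situation, which is not routine and which I have not checked.

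Since $\mathcal P$ is an $A$-submodule, any $0\neq a\phi\in\mathcal N$ with $\phi\in\mathcal P$ lies in $\mathcal N\cap\mathcal P=\mathcal M^\perp$, a contradiction. This is the mechanism of Theorem \ref{kernel_special}: a counterexample requires $Ar\cap\mathcal N=\{0\}$. Excluding that over W*-algebras is the step your proposal leaves open.
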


We want to follow the work by W.~L.~Paschke \cite{Paschke} to transfer properties from Hilbert C*-modules to Hilbert W*-modules over the bidual von Neumann algebra.
Let $A$ be any C*-algebra with bidual von Neumann algebra $A^{**}$. We identify $A$ with its canonical embedding via the universal $*$-representation of $A$ in $A^{**}$ without special notations. So they share the identity in that picture.  

For any Hilbert $A$-module $\mathcal K$, one can form the tensor product ${\mathcal K} \odot_A A^{**}$, identifying elementary tensors $ax \otimes b$ and $x \otimes ba$ for any $a \in A$, $b \in A^{**}$, $x \in \mathcal K$, cf. \cite[pp.~459-460, 463]{Paschke}. On elementary tensors, the $A$-valued inner product on $\mathcal K$ can be extended by setting 
\[
      \langle x \otimes a, y \otimes b \rangle = \langle ax \otimes 1_{A^{**}}, by \otimes 1_{A^{**}} \rangle = a \langle x \otimes 1_A, y \otimes 1_A \rangle b^* = a \langle x,y \rangle b^* 
\]    
for any $x,y \in \mathcal K$, any $a,b \in A^{**}$.  Denote the norm-completion of ${\mathcal K} \odot_A A^{**}$ by ${\mathcal K}^{\#}$, Lifting the $A$-valued inner product from $\mathcal K$ to an $A^{**}$-valued inner product on ${\mathcal K}^{\#}$ we get the structure of a Hilbert $A^{**}$-module.
Consider a bounded module map $T: {\mathcal K} \to \mathcal K$ and its extension to ${\mathcal K}^{\#}$ by the formula $T(x \otimes b) = T(x) \otimes b$ for any $x \in \mathcal K$, $b \in A^{**}$. We get a bounded $A^{**}$-linear map on ${\mathcal K}^{\#}$ of the same operator norm extending $T$, also denoted by $T$.

Following W.~L.~Paschke's arguments in \cite{Paschke} we can extend the $A^{**}$-valued inner product from ${\mathcal K}^{\#}$ to its $A^{**}$-dual Banach $A^{**}$-module $({\mathcal K}^{\#})'$, turning the latter into a self-dual Hilbert $A^{**}$-module. Also, there is a unique bounded $A^{**}$-linear extension $T: ({\mathcal K}^{\#})' \to ({\mathcal K}^{\#})'$ of any bounded $A^{**}$-linear operator $T: {\mathcal K}^{\#} \to {\mathcal K}^{\#}$ preserving its operator norm, cf.~\cite[Thm.~3.2, Prop.~3.6, Cor.~4.3]{Paschke}. Reconsidering Example \ref{ex-max-ideal}, for ${\mathcal K}=I$ we get $({\mathcal K}^{\#})' = A^{**}(1-p)$, and not $A^{**}$.

Motivated by Urysohn's lemma, we first consider a situation with certain self-adjoint operators $S$.

\begin{lem} \label{lemma_sa}
Let $A$ be a C*-algebra with bidual von Neumann algebra $A^{**}$. Consider two Hilbert $A$-modules ${\mathcal M} \subseteq {\mathcal N}$ with ${\mathcal M}^\bot=\{0\}$ relative to $\mathcal N$. We are especially interested in the case of non-coincidence of these two modules. Let $S: {\mathcal N} \to {\mathcal N}$ be a self-adjoint bounded $A$-linear operator with $S\big|_{\mathcal M}=0$ and $S\neq 0$.

Form the Paschke extensions ${\mathcal M}^{\#}$, ${\mathcal N}^{\#}$ and their selfdual completions $({\mathcal M}^{\#})' \subseteq ({\mathcal N}^{\#})'$ as in \cite{Paschke}. Let $P: ({\mathcal N}^{\#})' \to ({\mathcal M}^{\#})'$ be the orthogonal projection coming from the selfdual Hilbert $A^{**}$-module structure.

Then, considering the canonical bounded $A^{**}$-linear extension $S: ({\mathcal N}^{\#})'\to ({\mathcal N}^{\#})'$, we obtain a contradiction to the von Neumann case of Lemma \ref{thm_von_N}  as proved in \cite[Thm.~3.8]{Frank_2024}. In particular, such a self-adjoint bounded $A$-linear operator $S: {\mathcal N} \to {\mathcal N}$ with $S \not= 0$ cannot exist, $S=0$.
\end{lem}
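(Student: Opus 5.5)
The plan is to exploit self-adjointness directly, before any passage to $A^{**}$, because adjointability alone already forces the range of $S$ into ${\mathcal M}^\bot$. Take $m \in {\mathcal M}$ and $x \in {\mathcal N}$. Since $S$ is self-adjoint and $S\big|_{\mathcal M}=0$,
\[
\langle S(x), m \rangle = \langle x, S(m) \rangle = \langle x, 0 \rangle = 0 \, .
\]
Hence $S(x) \in {\mathcal M}^\bot$ relative to $\mathcal N$. By hypothesis ${\mathcal M}^\bot=\{0\}$, so $S(x)=0$ for every $x$, and $S=0$. This is the same mechanism as in the proof of Lemma \ref{case_selfadjoint}: ${\rm Im}(S) \subseteq {\rm Ker}(S)^\perp$, and here ${\rm Ker}(S) \supseteq {\mathcal M}$ gives ${\rm Ker}(S)^\perp \subseteq {\mathcal M}^\perp = \{0\}$. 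So the assumption $S \neq 0$ is contradictory, which is the final assertion.

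To also obtain the formulation via Lemma \ref{thm_von_N}, I would then run the Paschke route as a consistency check, in three steps.

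\emph{Step 1.} Extend $S$ to ${\mathcal N}^{\#}$ by $S(x\otimes b)=S(x)\otimes b$. On elementary tensors,
\[
\langle S(x)\otimes a, y\otimes b\rangle = a\langle S(x),y\rangle b^* = a\langle x,S(y)\rangle b^* = \langle x\otimes a, S(y)\otimes b\rangle \, ,
\]
so the extension is self-adjoint on ${\mathcal N}^{\#}$.

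\emph{Step 2.} Pass to the unique norm-preserving extension on $({\mathcal N}^{\#})'$ given by \cite[Prop.~3.6, Cor.~4.3]{Paschke}. This extension remains self-adjoint, because adjoints of bounded operators on a self-dual module are determined by their values on the dense-in-the-$w^*$-sense submodule ${\mathcal N}^{\#}$. It also vanishes on ${\mathcal M}^{\#}$, and hence, by uniqueness and $w^*$-continuity of bounded module maps on self-dual W*-modules, on $({\mathcal M}^{\#})'$. In other words, $S = S(1-P)$ with $P$ as in the statement.

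\emph{Step 3.} This is where I expect the main obstacle. Example \ref{ex-max-ideal} shows that $(({\mathcal M}^{\#})')^\perp$ relative to $({\mathcal N}^{\#})'$ need not be trivial, so one cannot simply quote Lemma \ref{thm_von_N} with the pair $({\mathcal M}^{\#})' \subseteq ({\mathcal N}^{\#})'$. I would circumvent this by noting that $S=0$ already on the original level, by the first argument. The extended operator is therefore zero, and any hypothetical non-zero $S$ would produce a non-zero bounded $A^{**}$-linear map vanishing on a submodule whose complement it must annihilate. That is exactly the configuration excluded in \cite[Thm.~3.8]{Frank_2024}, giving the stated contradiction.
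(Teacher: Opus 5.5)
Your argument is correct, and it takes a genuinely different, much more elementary route than the paper.

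\textbf{The paper's route.} It passes to Paschke's self-dual completions over $A^{**}$. It notes that the extended $S$ vanishes on $({\mathcal M}^{\#})'$, i.e.\ $S\circ P=0$; here it also, redundantly, invokes \cite[Thm.~3.8]{Frank_2024}. Self-adjointness then gives $P\circ S=0$, hence $S=(\mathrm{id}-P)\circ S\circ(\mathrm{id}-P)$. Finally it restricts back to ${\mathcal N}$: for $n\in{\mathcal N}$ one gets $S(n)\in{\mathcal N}\cap(({\mathcal M}^{\#})')^\perp\subseteq{\mathcal M}^\perp=\{0\}$. So its final contradiction is with ${\mathcal M}^\perp=\{0\}$, exactly as in yours, and the ``contradiction to the von Neumann case'' is nominal in both.

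\textbf{Comparison.} The decisive step in both proofs is that self-adjointness converts $S|_{\mathcal M}=0$ into ${\rm Im}(S)\perp{\mathcal M}$. You simply perform this step in ${\mathcal N}$ itself, where it takes one line. Applied to $S^*$, it also works for any adjointable $S$. Your route buys brevity. It also avoids the paper's unjustified appeal to \cite{Frank_2024} for the pair $({\mathcal M}^{\#})'\subseteq({\mathcal N}^{\#})'$, whose orthogonal complement need not be trivial by Example \ref{ex-max-ideal}. The paper's route buys the framework of $P$ (and $Q$) that is reused in Lemma \ref{lem:3.5} for non-self-adjoint $S$. There $P\circ S=0$ is no longer automatic and has to be assumed.

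\textbf{Your Step~3.} It adds nothing and is circular as written. You deduce that the extension vanishes from $S=0$, which you already proved at the original level. You then claim a configuration excluded by \cite[Thm.~3.8]{Frank_2024}, although, as you note yourself, the W*-pair need not satisfy that theorem's hypothesis. Drop it: your first paragraph alone is a complete proof of the statement.
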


\begin{proof}
By Paschke's construction \cite{Paschke}, ${\mathcal M}^{\#}$ and ${\mathcal N}^{\#}$ are Hilbert $A^{**}$-modules, and the given operator $S: {\mathcal N} \to {\mathcal N}$ admits a unique bounded $A^{**}$-linear extension (still denoted $S$) of equal norm to ${\mathcal N}^{\#}$ and further to the selfdual Hilbert $A^{**}$-module $({\mathcal N}^{\#})'$; see \cite[Thm.~3.2, Prop.~3.6, Cor.~4.3]{Paschke} and \cite[Thm.~3.3]{Frank_2024}. The inclusion ${\mathcal M}\subseteq {\mathcal N}$ yields $({\mathcal M}^{\#})' \subseteq ({\mathcal N}^{\#})'$, and since $({\mathcal M}^{\#})'$ and $({\mathcal N}^{\#})'$ are selfdual by construction, $({\mathcal M}^{\#})'$ is an orthogonal direct summand of $({\mathcal N}^{\#})'$. Let $P: ({\mathcal N}^{\#})' \to ({\mathcal M}^{\#})'$ be the corresponding orthogonal projection, which could be unequal to the identity map.

By construction, $S$ vanishes on $\mathcal M$, and hence, on ${\mathcal M}^{\#}$ and on $({\mathcal M}^{\#})'$ because, as a bounded $A^{**}$-linear operator on the self-dual Hilbert $A^{**}$-module $({\mathcal N}^{\#})'$, it has an orthogonally complemented kernel. On the other hand, the operator $S$ is non-zero on $\mathcal N$ by assumption, so also on $({\mathcal N}^{\#})'$, and $S = S \circ P + S \circ (\mathrm{id}-P)$. By \cite[Thm.~3.8, Prop.~4.3]{Frank_2024}, for pairs of Hilbert $A^{**}$-modules with trivial orthogonal complement in the von Neumann algebra setting, there is no non-zero bounded $A^{**}$-linear operator that vanishes on the smaller module. More precisely, applied to the pair $({\mathcal M}^{\#})' = P({\mathcal N}^{\#})' \subseteq ({\mathcal N}^{\#})'$ and to the restriction of $S$ to $P({\mathcal N}^{\#})')$, the results of \cite{Frank_2024} imply that $S|_{P({\mathcal N}^{\#})'}=0$. This gives also $S\circ P=0$ and $S= S \circ (\mathrm{id}-P)$. By the self-adjointness of $S$, we have also $({\mathrm id}-P) \circ S = S$ and $P \circ S = 0$. 

Furthermore, the orthogonal complement $({\mathcal M}^{\#})'^\bot$ inside $({\mathcal N}^{\#})'$ is also a selfdual Hilbert $A^{**}$-module, 
and the restriction $S\big|_{({\mathcal M}^{\#})'^\bot}$ is a bounded $A^{**}$-linear operator on it. 

Since $P \circ S = S\circ P=0$, we have $S = ({\mathrm id}-P) \circ S\circ (\mathrm{id}-P)$ on $({\mathcal N}^{\#})'$. In particular, since $S\neq 0$ is assumed on ${\mathcal N}$, there exists $n\in {\mathcal N}\subset ({\mathcal N}^{\#})'$ with $S(n)\neq 0$. Then $S(n)=({\mathrm id}-P) \circ S \circ (\mathrm{id}-P)(n)\in ({\mathcal M}^{\#})'^\bot$,  and $S(n)\in {\mathcal N}$ by assumption. Thus $S(n)\in {\mathcal N}\cap ({\mathcal M}^{\#})'^\bot$.

However, ${\mathcal N}\cap ({\mathcal M}^{\#})'^\bot\subset {\mathcal M}^\bot$ (with respect to ${\mathcal N}$),  because the inner product on ${\mathcal N}$ is the restriction of that on $({\mathcal N}^{\#})'$, and respectively for ${\mathcal M}\subset ({\mathcal M}^{\#})'$.  Since $S(n)\neq 0$ and $S(n)\in {\mathcal M}^\bot$, this contradicts ${\mathcal M}^\bot=\{0\}$ in ${\mathcal N}$.  Thus $({\mathrm id}-P) \circ S \circ (\mathrm{id}-P)=0$. 

Combining $S\circ P=0$ and $S\circ(\mathrm{id}-P)=0$, we obtain $S=0$ on $({\mathcal N}^{\#})'$, hence on ${\mathcal N}$, contradicting $S\neq 0$.  Therefore, no such self-adjoint $S$ can exist.
\end{proof}

\begin{rmk}
Bounded adjointable operators on Hilbert C*-modules admit polar decomposition if and only if the norm-closure of their range as well as their kernels are direct orthogonal summands, \cite[Thms.~15.3.7, 15.3.8]{NEWO}. Their kernels are always biorthogonally complemented. It is easy to find examples of adjointable operators on Hilbert C*-modules, the range of which is neither norm-complete nor the {norm-completion} of it is biorthogonally complemented, because there might not exists any orthogonal projection operators on the Hilbert C*-module besides the zero and the identity operator, like for $A={\mathcal X}=C([0,1])$.  
However, non-adjointable module operators may admit kernels and the norm-closure of their ranges, which are not direct orthogonal summands anymore.  Consider a (unital) C*-algebra $A$ and a one- or two-sided norm-closed ideal $I$ in it. Form the Hilbert $A$-module $A \oplus I$ as a submodule of the canonical Hilbert $A$-module $A^2$. Consider the skew projection $P: A \oplus I \to A \oplus I$ defined by the formula $P((a,i))=(i,i)$ for $a \in A$, $i \in I$. Then the range $P(A \oplus I)=\{ (i,i) : i \in I \}$ is a norm-closed Hilbert $A$-submodule and even coincides with its biorthogonal complement, but it is only a topological direct summand with topological complement $(\mathrm{id}-P)(A \oplus I)= \{ (a-i,0) : a \in A, i \in I \}$. Its orthogonal complement is the submodule $P(A \oplus I)^\bot=\{ (-j,j) : j \in I \}$, which is also only a topological direct summand, but it is biorthogonally complemented, too. Note, that there exists a non-negative operator $S$ on $A \oplus I$ such that $\langle x,S(y) \rangle = \langle P(x),P(y) \rangle$ for any $x,y \in A \oplus I$, even $P$ is a non-adjointable bounded module operator, cf.~\cite{Sk_2025}. The operator $S$ equals two times the direct orthogonal projection onto the second direct orthogonal summand of $A \oplus I$. The eigenvalues of $S$ are $\{ 0, 2 \cdot \mathrm{id} \} \in A$ with obvious eigen-submodules $A \oplus \{ 0 \}$ and $\{ 0 \} \oplus I$. 
\end{rmk}

The situation becomes more complicated if $S$ is not assumed to be self-adjoint. The example by J.~Kaad and M.~Skeide in \cite{KaSk} is of that kind. 

\begin{lem}\label{lem:3.5}
Assume the situations and constructions described at Lemma \ref{lemma_sa}. We consider operators $S$ not necessarily self-adjoint. Let $Q: ({\mathcal N}^{\#})' \to {\rm Ker}(S)$ be the corresponding orthogonal projection, $Q \geq P$. 
\begin{itemize}
\item[(i)]  If $P=\mathrm{id}$ then $S=0$.
\item[(ii)] If $P \circ S = 0$ or $Q \circ S = 0$ then $S=0$. This happens in particular if $S$ and $P$ or $Q$ commute. 
\end{itemize}
Consequently, the existence of a non-trivial bounded $A$-linear operator $S$ on $\mathcal N$ with $S \big|_{\mathcal M} = 0$ forces $P \circ S \not = 0$, $Q \circ S \neq 0$,  $({\mathrm id}-P) \circ S \neq 0$ and $({\mathrm id}-Q) \circ S \neq 0$, together with $S \circ P = S \circ Q =0$, i.e.~$S = S \circ ({\mathrm id}-Q) = S \circ ({\mathrm id}-P)$.
\end{lem}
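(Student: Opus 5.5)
We reuse the setup and the first steps of the proof of Lemma \ref{lemma_sa}. There it was shown, using \cite[Thm.~3.8, Prop.~4.3]{Frank_2024}, that the extended operator $S$ on the selfdual Hilbert $A^{**}$-module $({\mathcal N}^{\#})'$ satisfies $S \circ P = 0$; self-adjointness was not used for that step. Since $S$ is a bounded $A^{**}$-linear operator on a selfdual Hilbert W*-module, its kernel ${\rm Ker}(S)$ is an orthogonal summand. Hence $Q$ exists and $Q \geq P$, and by definition $S\circ Q = 0$. Consequently $S = S\circ({\rm id}-Q) = S\circ({\rm id}-P)$ holds unconditionally. This settles the last identities of the statement.

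For (i), if $P={\rm id}$, then $S = S\circ P = 0$ on $({\mathcal N}^{\#})'$, hence on ${\mathcal N}$.

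For (ii), suppose $P\circ S=0$. Then for every $n\in{\mathcal N}$ we have $S(n)\in{\mathcal N}$ and $S(n)=({\rm id}-P)S(n)\in ({\mathcal M}^{\#})'^{\perp}$. As in the last part of the proof of Lemma \ref{lemma_sa}, the inner product on ${\mathcal N}$ is the restriction of the one on $({\mathcal N}^{\#})'$, and ${\mathcal M}\subseteq({\mathcal M}^{\#})'$. Therefore ${\mathcal N}\cap({\mathcal M}^{\#})'^{\perp}\subseteq {\mathcal M}^\perp=\{0\}$, so $S(n)=0$ and $S=0$.

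Now suppose $Q\circ S=0$. Since $Q\geq P$, we have $P = PQ$, so $P\circ S = P\circ Q\circ S = 0$, and the previous case applies.

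Commutation gives these hypotheses directly: if $SP=PS$, then $PS=SP=0$, and if $SQ=QS$, then $QS=SQ=0$.

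The consequence is obtained by contraposition. If $S\neq 0$, then (ii) yields $P\circ S\neq 0$ and $Q\circ S\neq0$. If instead $({\rm id}-P)\circ S=0$, then $S = P\circ S$ and $S(n)\in ({\mathcal M}^{\#})'$ for all $n$, and we still need to rule this out. Here I would apply the von Neumann case, Lemma \ref{thm_von_N}, to $S^2$: on one hand $S^2 = S\circ P\circ S = 0$. The main obstacle is that $S^2=0$ alone is not a contradiction. I would instead compose with a functional: for $y\in({\mathcal N}^{\#})'$, the map $x\mapsto\langle S(x),y\rangle$ vanishes on $({\mathcal M}^{\#})'$. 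One then needs to argue that its range lies inside $({\mathcal M}^{\#})'$ and derive a contradiction with the self-duality of $P({\mathcal N}^{\#})'$.

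Failing that, I would try to show directly that the restriction of $S$ to $({\rm id}-P)({\mathcal N}^{\#})'$, which maps into $({\mathcal M}^{\#})'$, is impossible because of $S\circ P=0$ and the nondegeneracy of ${\mathcal M}$ in ${\mathcal N}$. The $Q$ version would be handled analogously, using ${\rm id}-Q\leq {\rm id}-P$. I expect this step, $({\rm id}-P)S\neq0$, to be the genuine difficulty; everything else is a formal consequence of Lemma \ref{lemma_sa}'s argument.
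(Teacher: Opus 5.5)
Your treatment of (i), (ii), the commutation remark and the identities $S\circ P=S\circ Q=0$ is correct, and so is the resulting $P\circ S\neq 0$, $Q\circ S\neq 0$ for $S\neq 0$. This is essentially the paper's argument: $S\circ P=0$ is taken from the proof of Lemma~\ref{lemma_sa}, and $P\circ S=0$ forces $S(n)\in{\mathcal N}\cap(({\mathcal M}^{\#})')^{\perp}\subseteq{\mathcal M}^{\perp}=\{0\}$. Your reduction of $Q\circ S=0$ to $P\circ S=0$ via $P=PQ$ is cleaner than the paper's detour through $(Q-P)\circ S$.

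The step you flagged cannot be closed, because $(\mathrm{id}-P)\circ S\neq 0$ and $(\mathrm{id}-Q)\circ S\neq 0$ are false in general. The paper's proof, which only argues (i) and (ii), offers nothing for them either.

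First note two formal facts. One is that $(\mathrm{id}-Q)\circ S=0$ iff $\mathrm{Im}(S)\subseteq\mathrm{Ker}(S)$ iff $S^2=0$. The other is that $(\mathrm{id}-P)\circ S=0$ implies $(\mathrm{id}-Q)\circ S=0$, since $\mathrm{id}-Q=(\mathrm{id}-Q)(\mathrm{id}-P)$. So the $Q$-claim is the stronger one and would not follow ``analogously'' from the $P$-claim.

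Now any $S\neq 0$ with $S|_{\mathcal M}=0$ and $S({\mathcal N})\subseteq{\mathcal M}$ violates both claims. For instance, take the Kaad--Skeide functional $r$ (so $r(e_{(0)})=1_A$ and $r|_{\mathcal M}=0$), pick $0\neq m\in{\mathcal M}$, and put $S(x)=r(x)m$. Then:
\begin{itemize}
\item $S(e_{(0)})=m\neq 0$;
\item $S$ maps $\mathcal N$ into $Am\subseteq{\mathcal M}\subseteq P(({\mathcal N}^{\#})')$;
\item $S^2=0$, because $S$ kills $\mathcal M$.
\end{itemize}

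These identities pass to the extension on $({\mathcal N}^{\#})'$. A bounded $A^{**}$-linear operator there that vanishes on $\mathcal N$ vanishes on ${\mathcal N}^{\#}$. Being adjointable on a selfdual module, it then vanishes everywhere, since $({\mathcal N}^{\#})^{\perp}=\{0\}$ in $({\mathcal N}^{\#})'$. Hence $(\mathrm{id}-P)\circ S=0=(\mathrm{id}-Q)\circ S$ for this non-zero $S$.

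Your remark that $S^2=0$ yields no contradiction is exactly the point. The ``consequently'' clause holds only with $P\circ S\neq 0$, $Q\circ S\neq 0$ and $S\circ P=S\circ Q=0$. The two conditions on $(\mathrm{id}-P)\circ S$ and $(\mathrm{id}-Q)\circ S$ should be dropped rather than proved.
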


\begin{proof}
We can rely on \cite[Thms.~15.3.7, 15.3.8]{NEWO} and the fact that the kernel of every bounded module operator $S$ between Hilbert W*-modules is biorthogonally complemented and $S$ admits polar decomposition over $({\mathcal N}^{\#})'$, cf.~\cite{Frank_2024}. So $Q$ is well-defined. 

(i): This follows from Lemma \ref{thm_von_N}. 

(ii): Assume $P \circ S = 0$ or $Q \circ S = 0$. Since $Q \circ S = 0$ means  $P \circ S = 0$, we start to investigate the latter case. Arguing as in the proof of Lemma \ref{lemma_sa}, replacing the self-duality property of $S$ used there by the assumption $P \circ S = 0$, we arrive at $S=0$. If $Q \circ S = 0$ then $(Q-P) \circ S = 0$, and by the already proved part $S(n) \in {\mathcal N} \cap (Q-P)(({\mathcal N}^{\#})') \perp ({\mathcal M}^{\#})'$. By assumption  ${\mathcal M}^\bot = \{ 0 \}$, so $S(n) \neq 0$ is a contradiction to the assumption. Therefore, $S=0$. 
\end{proof}

Summing up, we found the first characteristics of the structural background of positive and negative answers to the Questions \ref{conj_1} and \ref{conj_2}. However, the investigations should be continued to get a much clearer picture of certain particular indicators in cases of positive or negative answers. 

Informally speaking, investigated non-zero bounded $A$-linear operators $S$ map the very small part $({\mathrm{id}}-P)(({\mathcal N}^{\#})')$ of the weak closure of $\mathcal N$, in particular into the range of the operator $P \circ S$ of $({\mathcal N}^{\#})'$. The small part $({\mathrm id}-P)(({\mathcal N}^{\#})')$ would be located close to the boundary of $\mathcal M$ in $\mathcal N$.

 \section{Norm control via the quotient by $\mathcal{M}$}

In this section, we record a norm estimate for bounded $A$-linear operators $S \in \mathrm{End}_{A}(\mathcal{N})$ satisfying $S|_{\mathcal{M}}=0$. Since $\mathcal{M}^{\perp}=\{0\}$ in $\mathcal{N}$, there is in general no non-zero orthogonal complement of $\mathcal{M}$ inside $\mathcal{N}$ that could be used to measure the action of $S$. The natural object is therefore the quotient module $\mathcal{N}/\mathcal{M}$.

Let $q: \mathcal{N} \to \mathcal{N}/\mathcal{M}$ be the quotient map. Since $S|_{\mathcal{M}}=0$, there exists a unique bounded $A$-linear map $\tilde{S}: \mathcal{N}/\mathcal{M} \to \mathcal{N}$ such that $S = \tilde{S} \circ q$. Note that this factorization is canonical.

\begin{prop}
Let $A$ be a $C^*$-algebra, let $\mathcal{M} \subseteq \mathcal{N}$ be Hilbert $A$-modules with $\mathcal{M}^{\perp}=\{0\}$ in $\mathcal{N}$, and let $S \in \mathrm{End}_{A}(\mathcal{N})$ satisfy $S|_{\mathcal{M}}=0$. Then 
\[
\|S\| = \|\tilde{S}\| = \sup\{\|Sx\| : x \in \mathcal{N},\, \mathrm{dist}(x,\mathcal{M})=1\}.
\]
\end{prop}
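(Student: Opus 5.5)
The plan is to use only the elementary properties of the quotient norm on $\mathcal{N}/\mathcal{M}$; the assumption $\mathcal{M}^\perp=\{0\}$ plays no role in the norm identity. Since $\mathcal{M}$ is a Hilbert $A$-submodule, it is norm-closed in $\mathcal{N}$. Hence $\mathcal{N}/\mathcal{M}$ is a Banach $A$-module with the quotient norm $\|q(x)\| = \mathrm{dist}(x,\mathcal{M}) = \inf_{m\in\mathcal{M}}\|x-m\|$. In particular $\|q(x)\|\le\|x\|$, so $\|q\|\le 1$, and $\mathrm{dist}(x,\mathcal{M})=0$ holds exactly when $x\in\mathcal{M}$.

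First I will show $\|\tilde S\|\le\|S\|$, which also gives the boundedness of $\tilde S$ asserted before the proposition. For $x\in\mathcal{N}$ and any $m\in\mathcal{M}$, the hypothesis $S|_{\mathcal{M}}=0$ gives
\[
\|\tilde S(q(x))\| = \|Sx\| = \|S(x-m)\| \le \|S\|\,\|x-m\| .
\]
Taking the infimum over $m\in\mathcal{M}$ yields $\|\tilde S(q(x))\|\le \|S\|\,\|q(x)\|$. The reverse inequality follows from the factorization $S=\tilde S\circ q$ together with $\|q\|\le 1$, which give $\|S\|\le\|\tilde S\|\,\|q\|\le\|\tilde S\|$. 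Note that only $\|q\|\le1$ is needed here, not $\|q\|=1$.

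Next I will identify $\|\tilde S\|$ with the stated supremum. Since $q$ is surjective,
\[
\|\tilde S\| = \sup\{\|\tilde S(q(x))\| : \|q(x)\|\le 1\} = \sup\{\|Sx\| : \mathrm{dist}(x,\mathcal{M})\le 1\}.
\]
Points with $\mathrm{dist}(x,\mathcal{M})=0$ lie in $\mathcal{M}$ and contribute $Sx=0$. Every other $x$ with $0<\mathrm{dist}(x,\mathcal{M})\le 1$ can be rescaled to $x/\mathrm{dist}(x,\mathcal{M})$, which has distance exactly $1$ to $\mathcal{M}$ (using $\mathrm{dist}(\lambda x,\mathcal{M})=|\lambda|\,\mathrm{dist}(x,\mathcal{M})$ for scalars $\lambda\neq0$, as $\mathcal{M}$ is a linear subspace). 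This rescaling does not decrease $\|Sx\|$, so the supremum may be restricted to the sphere $\mathrm{dist}(x,\mathcal{M})=1$.

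The only point requiring care, rather than a genuine obstacle, is the degenerate case $\mathcal{M}=\mathcal{N}$. Then the set $\{x : \mathrm{dist}(x,\mathcal{M})=1\}$ is empty, but also $S=0$. With the convention $\sup\emptyset=0$, all three quantities vanish, so the statement holds in this case as well.
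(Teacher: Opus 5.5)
Your proof is correct and follows the same route as the paper: factor $S=\tilde S\circ q$, get $\|S\|\le\|\tilde S\|$ from $\|q(x)\|=\mathrm{dist}(x,\mathcal{M})\le\|x\|$, and identify $\|\tilde S\|$ with the supremum over $\{x:\mathrm{dist}(x,\mathcal{M})=1\}$ using surjectivity of $q$. Your estimate $\|Sx\|=\|S(x-m)\|\le\|S\|\,\|x-m\|$, followed by the infimum over $m\in\mathcal{M}$, is exactly what yields $\|\tilde S\|\le\|S\|$ and the boundedness of $\tilde S$; the paper justifies this direction only by $\|q(x)\|\le\|x\|$, which on its own gives the opposite inequality, so your version is the more complete one.
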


\begin{proof}
Since $S = \tilde{S} \circ q$, for all $x \in \mathcal{N}$ we have
\[
\|Sx\| = \|\tilde{S}(q(x))\| \le \|\tilde{S}\| \|q(x)\| = \|\tilde{S}\| \mathrm{dist}(x, \mathcal{M}) \le \|\tilde{S}\| \|x\|.
\]
Taking the supremum over all $\|x\| \le 1$ yields $\|S\| \le \|\tilde{S}\|$.

Conversely, by definition of the operator norm on the quotient space $\mathcal{N}/\mathcal{M}$,
\[
\|\tilde{S}\| = \sup \{ \|\tilde{S}(z)\| : z \in \mathcal{N}/\mathcal{M},\, \|z\|=1 \} = \sup \{ \|Sx\| : x \in \mathcal{N},\, \mathrm{dist}(x, \mathcal{M})=1 \}.
\]
Since $\mathrm{dist}(x, \mathcal{M}) = \|q(x)\| \le \|x\|$, it follows directly that $\|\tilde{S}\| \le \|S\|$. Thus, $\|S\| = \|\tilde{S}\|$, and the supremum formula holds.
\end{proof}

\begin{rmk}
This formulation measures the action of $S$ transverse to $\mathcal{M}$ without assuming the existence of an orthogonal complement of $\mathcal{M}$ in $\mathcal{N}$. It is therefore compatible with the standing hypothesis $\mathcal{M}^{\perp}=\{0\}$.

In the Kaad--Skeide example discussed in Section \ref{sec5} below, the special separating functional $r: \mathcal{N} \to A$ satisfies $\|r\|=2$. Thus, the quotient viewpoint is consistent with the explicit norm computation available there, while avoiding an unjustified geometric decomposition of $\mathcal{N}$ along $\mathcal{M}$.
\end{rmk}


\section{Structural analysis of the Kaad--Skeide example} \label{sec5}

We consider pairs of full Hilbert $A$-modules ${\mathcal M} \subset \mathcal N$ over C*-algebras $A$ such that ${\mathcal M}^\bot = \{ 0 \}$ relative to $\mathcal N$. The question is whether there exist non-trivial bounded $A$-linear maps $r: {\mathcal N} \to A$ such that $r\big|_{\mathcal M} = 0$, or not.  
In 2021, Jens Kaad and Michael Skeide published an interesting counterexample in arxiv.org, which appeared 2023 in press, \cite{KaSk}. We are going to analyse it with respect to structural background information to be in a position to construct such counterexamples more systematically in cases, or to prove the non-appearance of them in other cases. 

Note, that the existence of such functionals $r$ forces the existence of bounded non-adjointable operators $S_{r,n}(x)=r(x) n$, $n \in \mathcal N$,  on $\mathcal N$ such that ${\rm Ker}(S_{r,n}) \supseteq \mathcal M$ and ${\rm Ker}(S_{r,n}) \not= {\rm Ker}(S_{r,n})^\bot$ relative to $\mathcal N$. Not all of these operators can be equal to zero at once. Kernels of bounded $A$-linear maps are always norm-complete by the continuity of these operators. Following \cite[Example 1.2]{KaSk}, the basic idea for the search is the following: 

\begin{itemize}
\item[(i)]  The non-trivial bounded $A$-linear functional $r: {\mathcal N} \to A$ which vanishes on ${\mathcal M} \subset {\mathcal N}$ generates an $A$-submodule $Ar = \{ ar : a \in A \}$ in ${\mathcal N}'$ which intersects with the isometric $A$-linear embedding ${\mathcal N} \subset {\mathcal N}'$ only in the zero element. It is `perpendicular' to $\mathcal M$, i.e.~it vanishes on $\mathcal M$, despite both $\mathcal M$ and $\mathcal N$ are norm-complete.
\item[(ii)] The kernel ${\rm Ker}(r)$ of $r$ is not orthogonally complemented with respect to $\mathcal N$, but norm-complete, and contains $\mathcal M$. 
\item[(iii)]The bounded $A$-linear functional $r$ is non-degenerate, i.e.~$ar=0$ if and only if $a=0$ for $a \in A$. The Hilbert $A$-modules $\mathcal M$, $\mathcal N$ are full.
\end{itemize}

\subsection{The setting and the components}

Let $A$ be a unital C*-algebra which will be selected specifically in the sequel. Consider the standard countably generated Hilbert $A$-module ${\mathcal K} = \ell_2(A)$ and its orthonormal basis $\{ e_i \}_{i=1}^\infty$, i.e. $\langle e_i, e_j \rangle = \delta_{ij} \cdot 1_A$. For any bounded $A$-linear functional $r: \ell_2(A) \to A$, we can derive the sequence $\{ r(e_i) \}_{i=1}^\infty$ for which the series $\{ \sum_{i=1}^n r(e_i)r(e_i)^* \}_{n=1}^\infty$ is well-defined and the sequence of norms of the partial sums is uniformly bounded by a finite (positive) constant. The norm of $r$ coincides with the least upper bound of all admissible bounds, i.e.~$\|  \sum_{i=1}^\infty r(e_i)r(e_i)^* \| = \| r \|$. The converse is true, too: all sequences $\{ a_i \}_{i=1}^\infty$ of elements of $A$ with finite bound $\|  \sum_{i=1}^\infty a_ia_i^* \|$ generate bounded $A$-linear functionals $r: \ell(A) \to A$ by the formula $\{ x_i \}_{i=1}^\infty \to \sum_{i=1}^\infty x_ia_i^*$. So the $A$-dual Banach $A$-module $\ell_2(A)'$ of $\ell_2(A)$ is the set of all these sequences of elements of $A$ with uniformly bounded derived series as described,
\[ 
   \ell_2(A)' = \left\{ \{ a_i \}_{i=1}^\infty : a_i \in A \,  , \,\, \left\|  \sum_{i=1}^\infty a_ia_i^* \right\| \leq C < \infty \right\} \, .
\]   
For a proof, see \cite[Prop.~2.5.5]{MT} and \cite[Lem.~4.1, Cor.~4.2]{Frank_1990}, cf.~\cite{Gao}.

In \cite{KaSk}, the canonical unitary $A$-linear isomorphism 
\begin{equation} \label{decomp}
    \ell_2(A) = A \oplus \ell_2(A)_{(1)} \oplus \ell_2(A)_{(2)} \oplus \dots \oplus \ell_2(A)_{(k)} \oplus \dots
\end{equation}
of Hilbert $A$-modules is used, where the right side is a direct orthogonal sum of Hilbert $A$-modules. For the counterexample, the entire bounded $A$-linear functional $r$ is built as a sequence of copies of a particular (parametrized) bounded $A$-linear functionals on the orthogonal summands, taking the identical map on the first component $A$. The C*-algebra $A$ is taken to be the C*-algebra $C([0,1])$ of all continuous functions on the unit interval $[0,1]$ equipped with the topology coming from the usual metric of numbers.

Selecting one component $\ell_2(A)_{(k)}$ and a number $q_k \in (0,1)$, a sequence of elements $\psi_{q_k,\ell} \in A$ is specified such that $\sum_{\ell=1}^\infty | \psi_{q_k,\ell}(t)|^2 = \chi_{[0,q_k)}(t)$, $t \in [0,1]$, where $\chi_{[0,q_k)}$ is the characteristic function of the interval $[0,q_k)$. A selection with good visualization is a sequence of sawtooth functions $\{ |\psi_{q_k,\ell}(t)|^2 \}_{\ell=1}^\infty$ being equal to zero on the intervals $t \in [0,(\ell-2)q_k / (\ell-1)]  \cup [\ell q_k/(\ell+1),1]$, rising linearly from $0$ to $1$ on the interval $[(\ell-2)q_k / (\ell-1) , (\ell-1)q_k / \ell]$ and falling linearly from $1$ to $0$ on the interval $[(\ell-1)q_k/\ell , \ell q_k / (\ell+1)]$, for $\ell \geq 2$.  For $\ell=1$ one defines $|\psi_{q_k,1}(t)|^2$ as the function falling linearly from $1$ at $0 \in [0,1]$ to $0$ at $ q_k/2 \in [0,1]$. The functions $\{ \psi_{q_k,\ell} \}_{\ell=1}^\infty$ can be taken positive real-valued, for simplicity. 

Obviously, the sequence $\Psi_{q_k }= \{ \psi_{q_k,\ell} \}_{\ell=1}^\infty$ belongs to $\ell_2(A)' \setminus \ell_2(A)$. So we can define a bounded $A$-linear functional $r_{(k)} : \ell_2(A)_{(k)} \to A$ by the formula
\begin{equation}  \label{funct_n}
   r_{(k)} ((a_{(k),\ell})) = \sum_{\ell=1}^\infty a_{(k),\ell} \psi_{q_k,\ell}^*
\end{equation}   
The multiplication of the bounded $A$-linear functional $\Psi_{q_k}=\{ \psi_{q_k,\ell} \}_{\ell=1}^\infty$ in $\ell_2(A)'$ by continuous functions $f \in C[0,1])$ such that with $f(q_k)=0$ gives elements of $\ell_2(A)_{(k)} \subset \ell_2(A)'_{(k)}$.  So, far not the entire $A$-submodule of $\ell_2(A)'$ generated by the single generator $\Psi_{q_k}=\{ \psi_{q_k,\ell} \}_{\ell=1}^\infty \in \ell_2(A)'$ does not belong to $\ell_2(A)$. 

\subsection{The specific bounded $A$-linear functional}
Varying over $q_k \in (0,1)$ and taking pairwise different functionals $\Psi_{q_k}$ on the components $\ell_2(A)_{(k)}$ of the used infinite decomposition \eqref{decomp} of $\ell_2(A)$ one can construct a culmulative example with the properties initially stressed for. 
Set ${\mathcal N} = \ell_2(A)$ in the decomposition \eqref{decomp}. Take $\mathcal M$ preliminary as the norm-closure of the set of elements  of $\ell_2(A)$ generated by the set of generators
\begin{equation}  \label{generators}
   \rho_{k,\ell} = ( 2^{-k}\psi_{q_k,\ell} e_{(0)} - \delta_{k,\ell} 1_A e_{(k),\ell}) \, , \,\, k \in {\mathbb N}, \,\, \ell \in {\mathbb N} \, .
 \end{equation}
Define the bounded $A$-linear functional $r: \ell_2(A) \to A$ by the formula
 \begin{equation}  \label{funct}
    r((a_{(0)}, a_{(k),\ell}) = a_{(0)} + \sum_{k=1}^\infty   2^{-k} r_k ((a_{(k),\ell})) = a_{(0)} + \sum_{k=1}^\infty   2^{-k} \sum_{\ell=1}^\infty a_{(k),\ell} \psi_{q_k,\ell}^* \, ,
\end{equation}    
cf.~\eqref{funct_n}. Note that $\|r\| = 2$ and $r(\rho_{(k),\ell}) = 0$ for any $\ell \in \mathbb N$, i.e. $r$ equals to zero on $\mathcal M$. Jens Kaad and Michael Skeide showed (cf.~\cite[Lem.~2.1]{KaSk}) that a selection of the sequence $\{ q_k : k \in {\mathbb N}, q_k \in (0,1) \}$ in a way that it is dense in $[0,1]$ forces ${\mathcal M}^\bot = \{ 0 \}$ with respect to $\mathcal N$. Such a set of numbers $\{ q_k \}$ characterizes bounded $A$-linear functionals of the seeked for exceptional characteristics. 
Indeed, $ar \in \ell_2(A)$ for some $a \in A$ iff $a(q_k)=0$ for any $k \in \mathbb N$, consequently $a=0$ since the sequence $\{ q_k \}$ is assumed to be dense in $[0,1]$. Thus, the $A$-submodule $Ar = \{ ar : a \in A \}$ in ${\mathcal N}'$ has a trivial intersection with ${\mathcal N}$, the latter considered as canonically isometrically embedded into ${\mathcal N}'$.  The kernel ${\rm Ker}(r)$ of $r$ is not orthogonally complemented with respect to $\mathcal N$, but norm-complete and contains $\mathcal M$, despite ${\mathcal M}$ is norm-complete and ${\mathcal M}^\bot = \{ 0 \}$ with respect to $\mathcal N$. The more, the set of bounded $A$-linear operators $\{ S_{r,n} : n \in {\mathcal N} \}$ on $\mathcal N$ described as $S_{r,n}(x)=r(x)n$, $x \in {\mathcal N}$, consists of surjective maps into the singly generated $A$-submodules $An = \{ an : a \in A \}$, where the $A$-modules $\{An : n \in {\mathcal N} \}$ might be not norm-closed sometimes. These operators are non-adjointable, map $\mathcal M$ to zero and have a norm-closed, but not orthogonally complemented kernel. They are left multipliers of the C*-algebra ${\rm K}_A({\mathcal N})$. Since $\mathcal M$ and $\mathcal N$ are full Hilbert $A$-modules and $A$ is unital, both these Hilbert $A$-modules coincide with their respective multiplier modules. So, the counterexample goes beyond the situations considered in \cite{Frank_2026}.

\begin{thm} \label{kernel_special}
Let $A$ be a C*-algebra and ${\mathcal M} \subset \mathcal N$ be two Hilbert $A$-modules such that ${\mathcal M}^\bot = \{ 0 \}$ with respect to $\mathcal N$. Let $r: {\mathcal N} \to A$ be a non-zero bounded $A$-linear functional such that $r \big|_{\mathcal M} = 0$. Then $Ar \cap {\mathcal N} = \{ 0 \}$. Moreover, the kernel of $r$ is not biorthogonally complemented, but norm-complete.
\end{thm}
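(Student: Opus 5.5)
The plan has three parts, taken in order: norm-completeness of ${\rm Ker}(r)$, the trivial intersection $Ar\cap{\mathcal N}=\{0\}$, and the failure of biorthogonal complementation. Throughout, ${\mathcal N}$ is viewed inside ${\mathcal N}'$ through the isometric embedding $y\mapsto\langle\cdot,y\rangle$, and ``$ar\in{\mathcal N}$'' means that $ar=\langle\cdot,y\rangle$ for some $y\in{\mathcal N}$.

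\emph{Norm-completeness.} Since $r$ is bounded, ${\rm Ker}(r)=r^{-1}(\{0\})$ is a norm-closed subset of the complete space ${\mathcal N}$. Since $r$ is $A$-linear, it is also an $A$-submodule. So ${\rm Ker}(r)$ is a Hilbert $A$-submodule of ${\mathcal N}$ with the restricted inner product, and it contains ${\mathcal M}$.

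\emph{Trivial intersection.} Suppose $a\in A$ satisfies $ar=\langle\cdot,y\rangle$ with $y\in{\mathcal N}$, where $(ar)(x)=r(x)a^*$ or $a\,r(x)$, depending on the convention for the module structure on ${\mathcal N}'$. For every $m\in{\mathcal M}$ we have $\langle m,y\rangle=(ar)(m)=0$, because $r|_{\mathcal M}=0$. Hence $y\in{\mathcal M}^\perp=\{0\}$, and therefore $ar=0$ as an element of ${\mathcal N}'$. This gives $Ar\cap{\mathcal N}=\{0\}$. As a special case with $a=1$ (or with an approximate unit), $r$ itself is not of the form $\langle\cdot,y\rangle$.

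\emph{Not biorthogonally complemented.} First, ${\mathcal M}\subseteq{\rm Ker}(r)$ gives ${\rm Ker}(r)^\perp\subseteq{\mathcal M}^\perp=\{0\}$. Hence ${\rm Ker}(r)^{\perp\perp}={\mathcal N}$. Since $r\neq0$, we have ${\rm Ker}(r)\neq{\mathcal N}={\rm Ker}(r)^{\perp\perp}$. So ${\rm Ker}(r)$ is a proper norm-closed submodule with trivial orthogonal complement, and ${\rm Ker}(r)\oplus{\rm Ker}(r)^\perp={\rm Ker}(r)\neq{\mathcal N}$. Thus the kernel is neither biorthogonally closed nor orthogonally complemented. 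This is in contrast to Lemma \ref{case_selfadjoint}(i), where adjointable operators always have kernels of the form ${\rm Im}(T^*)^\perp$.

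The main obstacle is not computational. It is to make precise which reading of ``biorthogonally complemented'' the theorem intends, and to fix the $A$-module structure on ${\mathcal N}'$ so that $ar$ and the embedding of ${\mathcal N}$ match the left-module conventions of the preliminaries. Once ${\rm Ker}(r)^{\perp\perp}={\mathcal N}$ is established, either reading (${\rm Ker}(r)={\rm Ker}(r)^{\perp\perp}$, or ${\rm Ker}(r)\oplus{\rm Ker}(r)^\perp={\mathcal N}$) fails, because $r\neq0$.
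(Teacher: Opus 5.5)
Your argument for $Ar\cap\mathcal N=\{0\}$ is the paper's argument: an element $ar=\langle\cdot,y\rangle$ forces $y\in\mathcal M^\perp=\{0\}$. Your version concludes $ar=0$ rather than assuming $ar\neq 0$ for $a\neq 0$, which is cleaner, since non-degeneracy of $r$ is not a hypothesis here. The paper's proof does not address the ``moreover'' part at all, and your argument correctly supplies it: closedness follows from continuity, and ${\rm Ker}(r)^{\perp}\subseteq\mathcal M^{\perp}=\{0\}$ gives ${\rm Ker}(r)^{\perp\perp}=\mathcal N\neq{\rm Ker}(r)$ because $r\neq0$.
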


\begin{proof}
Suppose there exists a non-zero element $a \in A$ such that $ar \in {\mathcal N}$, where $\mathcal N$ is considered canonically isometrically embedded into ${\mathcal N}'$ using the $A$-valued inner product on $\mathcal N$. Then $ar \in {\mathcal M}^\bot$ with respect to $\mathcal N$ and $ar \not= 0$, a contradiction to the assumption.
\end{proof}

\begin{conj}
Depending on the structures of $A$ and of ${\mathcal M} \subset \mathcal N$, does any non-zero element $r \in {\mathcal N}'$ with $Ar \cap {\mathcal N} = \{ 0 \}$ induce a kernel ${\rm Ker}(r) \subset \mathcal N$ such that ${\rm Ker}(r)^\bot = \{ 0 \}$ w.r.t. $\mathcal N$? If not, characterize the module dimension of the orthogonal complement of ${\rm Ker}(r)$ suitably.  (Clearly, ${\rm Ker}(r) \not= \mathcal N$ by assumption, and ${\rm Ker}(r)$ is a norm-closed $A$-submodule of $\mathcal N$.)
\end{conj}

Now, we are interested in the continuation of the C*-valued inner product on Hilbert C*-modules to larger Banach C*-modules in such a way that the initial Hilbert C*-module is isometrically embedded into the larger Banach C*-module which should become a Hilbert C*-module itself. Usually, the Banach C*-modules to be considered are C*-submodules of the C*-dual Banach C*-module of the initial Hilbert C*-module, cf.~\cite{Paschke}. But, also multiplier Hilbert C*-modules can host suitable settings. One known obstacle is the circumstance that one might not be able to make sense of the C*-valued inner product values on the larger module inside the C*-algebra of coefficients or in its multiplier C*-algebra, cf.~\cite{Frank_2024,Frank_2026}. The existence of bounded module operators on the initial Hilbert C*-module which cannot be extended to the larger Hilbert C*-module is less important, but attracts sometimes attention, cf.~\cite[Thm.~3.1, 3.3, 3.4]{Frank_2026}. We add another obstacle caused by an individual property of some single element of the C*-dual Banach C*-module which might appear sometimes. 

\begin{thm} \label{thm_no_lift}
Let $A$ be a C*-algebra and ${\mathcal M} \subset \mathcal N$ be two Hilbert $A$-modules such that ${\mathcal M}^\bot = \{ 0 \}$ with respect to $\mathcal N$. Let $r: {\mathcal N} \to A$ be a non-zero bounded $A$-linear functional such that $r \big|_{\mathcal M} = 0$. Suppose $r$ is non-degenerate,  i.e.~$ar=0$ if and only if $a=0$ for $a \in A$, and ${\mathcal N} \cap Ar = \{ 0 \}$ in the $A$-dual Banach $A$-module ${\mathcal N}'$ of $\mathcal N$. 
Then there does not exist any Hilbert $A$-module $\mathcal K$ such that $\mathcal N$ is isometrically embedded as a (Hilbert) $A$-submodule of $\mathcal K$ and $r$ is isometrically represented as an element of $\mathcal K$ fulfilling the equality $r(n)=\langle n,r \rangle_{\mathcal K}$ for any $n \in \mathcal N$. 
\end{thm}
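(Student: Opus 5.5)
Suppose, for a contradiction, that such a Hilbert $A$-module $\mathcal K$ exists. We regard $\mathcal N\subseteq\mathcal K$ as an isometric Hilbert $A$-submodule, which means the inner product of $\mathcal K$ restricts to that of $\mathcal N$. We also fix an element $k_r\in\mathcal K$ with $r(n)=\langle n,k_r\rangle_{\mathcal K}$ for all $n\in\mathcal N$ and $\|k_r\|=\|r\|$. The strategy is to use $k_r$ to build an operator on the Hilbert $A$-module $\mathcal K$ that has a clean (bi)orthogonal structure, transport it back to $\mathcal N$, and contradict ${\mathcal N}\cap Ar=\{0\}$ together with ${\mathcal M}^\bot=\{0\}$.

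\textbf{Step 1.} First we record what $k_r$ sees. Since $r|_{\mathcal M}=0$, we get $\langle m,k_r\rangle=0$ for all $m\in\mathcal M$, so $k_r\in{\mathcal M}^{\perp_{\mathcal K}}$. Non-degeneracy of $r$ implies $ak_r\notin\mathcal N^{\perp_{\mathcal K}}$ for every $a\neq0$. Moreover, ${\mathcal N}\cap Ar=\{0\}$ in ${\mathcal N}'$ means that no nonzero $ak_r$ induces the same functional on $\mathcal N$ as an element of $\mathcal N$.

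\textbf{Step 2.} Next we consider the adjointable rank-one operator $\theta(x)=\langle x,k_r\rangle k_r$ on $\mathcal K$, or, more usefully, the adjointable map $T:\mathcal K\to A$, $T(x)=\langle x,k_r\rangle$, which extends $r$. By Lemma \ref{case_selfadjoint}, ${\rm Ker}(T)=\{k_r\}^{\perp}$ is biorthogonally complemented in $\mathcal K$. Intersecting with $\mathcal N$ gives ${\rm Ker}(r)={\rm Ker}(T)\cap\mathcal N$. The aim is then to show that $\mathcal M^{\perp}=\{0\}$ in $\mathcal N$ forces the part of $k_r$ ``visible'' from $\mathcal N$ to vanish. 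More precisely, we pass to the Paschke extension $\mathcal K^{\#}$ and the self-dual module $(\mathcal K^{\#})'$ over $A^{**}$, which contains $(\mathcal N^{\#})'$ as an orthogonal summand with projection $P_{\mathcal N}$, and similarly $(\mathcal M^{\#})'$ with projection $P$. The functional $r$ extends to $(\mathcal N^{\#})'$ and is represented there by $P_{\mathcal N}k_r$. Now $r$ vanishes on $(\mathcal M^{\#})'$, so $P P_{\mathcal N}k_r=0$, and $P_{\mathcal N}k_r\in(\mathrm{id}-P)(\mathcal N^{\#})'$.

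\textbf{Step 3.} Finally, we apply the argument of Lemma \ref{lemma_sa} and Lemma \ref{lem:3.5} to the self-adjoint operator $S=\theta_{P_{\mathcal N}k_r,P_{\mathcal N}k_r}$ restricted to $\mathcal N$. The point is that $S(n)=r(n)\,P_{\mathcal N}k_r$, and $S$ would be a self-adjoint bounded operator on $(\mathcal N^{\#})'$ vanishing on $\mathcal M$. For Lemma \ref{lemma_sa} to apply, we need $S$ to map $\mathcal N$ into $\mathcal N$, that is, $r(n)P_{\mathcal N}k_r\in\mathcal N$. Establishing this, or circumventing it, is the expected main obstacle, since it is exactly what ${\mathcal N}\cap Ar=\{0\}$ denies. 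I would try first to work directly inside $\mathcal K$ instead: the operator $n\mapsto\langle n,k_r\rangle k_r$ is a bounded map $\mathcal N\to\mathcal K$, and composing with itself yields $\langle r(n)k_r,k_r\rangle=r(n)\langle k_r,k_r\rangle$. Using $\|k_r\|=\|r\|$ and the isometry, we compare $\|r\|^2$ computed through $\mathcal N$ (via approximate units of the range of $r$ inside $\mathcal N$) with $\|\langle k_r,k_r\rangle\|$. The goal is to show that $k_r$ is a norm limit of elements of $\mathcal N$, for instance of $r(n_\lambda)^*$-weighted elements. Closedness of $\mathcal N$ in $\mathcal K$ would then give some nonzero $ak_r\in\mathcal N$ representing $ar$, which contradicts ${\mathcal N}\cap Ar=\{0\}$ (equivalently, by Theorem \ref{kernel_special}, the assumption ${\mathcal M}^\bot=\{0\}$).
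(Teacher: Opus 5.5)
The proposal has a genuine gap at its decisive step, and that gap cannot be closed. Steps 1 and 2 yield no contradiction. Over $A^{**}$ the functional $r$ is represented in the self-dual module $({\mathcal N}^{\#})'$ anyway, so a nonzero representative orthogonal to $({\mathcal M}^{\#})'$ only records $P\neq\mathrm{id}$, and it does not use $\mathcal K$ at all. The argument of Lemma \ref{lemma_sa} closes by placing $S(n)$ in ${\mathcal N}\cap(({\mathcal M}^{\#})')^{\perp}\subseteq{\mathcal M}^{\perp}$, which requires $S(\mathcal N)\subseteq\mathcal N$. For $S(n)=r(n)P_{\mathcal N}k_r$ this fails, as you note yourself. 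Your fallback, showing that $k_r$ is a norm limit of elements of $\mathcal N$, is not a route to the contradiction but the contradiction restated. Since $\mathcal N$ is complete, hence closed in $\mathcal K$, it would give $k_r\in\mathcal N$, i.e.\ $r\in\mathcal N$. No comparison of $\|r\|^2$ with $\|\langle k_r,k_r\rangle\|$ can deliver this, because $\|k_r\|=\|r\|$ is compatible with $\mathrm{dist}(k_r,\mathcal N)>0$, as the construction below shows.

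In fact, as formulated (with $\|k\|=\|r\|$), the statement fails for unital $A$, in particular for the Kaad--Skeide example over $A=C([0,1])$. Put $\lambda=\|r\|>0$. Applying $r$ to $(\langle n,n\rangle+\varepsilon)^{-1/2}n$ and letting $\varepsilon\to0$ gives Paschke's inequality $r(n)r(n)^*\le\lambda^2\langle n,n\rangle$. On ${\mathcal N}\times A$ define
\[
\langle (n,a),(n',a')\rangle := \langle n,n'\rangle + r(n)a'^* + a\,r(n')^* + \lambda^2 aa'^* \, .
\]
This is $A$-linear in the first entry, hermitian, and positive, because
\[
\langle (n,a),(n,a)\rangle = \big(\langle n,n\rangle-\lambda^{-2}r(n)r(n)^*\big) + \big(\lambda^{-1}r(n)+\lambda a\big)\big(\lambda^{-1}r(n)+\lambda a\big)^* \geq 0 \, .
\]
Separating and completing gives a Hilbert $A$-module $\mathcal K$ in which $n\mapsto[(n,0)]$ preserves inner products. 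The element $k=[(0,1)]$ satisfies $\langle n,k\rangle=r(n)$ and $\langle k,k\rangle=\lambda^2 1_A$, so $\|k\|=\|r\|$. The hypotheses on $\mathcal M$, non-degeneracy and ${\mathcal N}\cap Ar=\{0\}$ play no role.

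The paper's proof takes a different route: $\theta_{r,r}=\theta_{k,k}$ on $\mathcal L=\overline{{\mathcal N}\dotplus Ar}$ together with Lemma \ref{case_selfadjoint}, with no passage to $A^{**}$. It uses no norm condition at all, so it too would exclude this $\mathcal K$. It breaks by treating orthogonal complements relative to $\mathcal N$ as if they were taken in $\mathcal L$. In $\mathcal L$ one has $k\in{\mathcal M}^{\perp}$, hence ${\mathcal M}^{\perp\perp}\subseteq\{k\}^{\perp}$, which does not contain $\mathcal N$ because $r\neq0$. Also, ${\rm Ker}(\theta_{k,k})=\{k\}^{\perp}\cap\mathcal L$ being biorthogonally complemented in $\mathcal L$ does not clash with ${\rm Ker}(r)=\{k\}^{\perp}\cap\mathcal N$ failing to be so in $\mathcal N$.
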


\begin{proof}
Suppose $\mathcal K$ exists with the assumed properties. Consider the $A$-submodule ${\mathcal N} \dotplus Ar$ of $\mathcal K$ and its norm-completion $\mathcal L$. The elementary ``compact'' operator $\theta_{r,r}$ is a positive bounded $A$-linear operator on $\mathcal L$, hence its kernel is biorthogonally complemented by Lemma \ref{case_selfadjoint}. Since $ar=0$ if and only if $a=0$ for $a \in A$, the kernel of $\theta_{r,r}(\cdot)= \langle \cdot, r \rangle r$ coincides with the kernel of $r$. We know, that the kernel of $r$ is not biorthogonally complemented and contains $\mathcal M$. Moreover, the biorthogonal complement of the kernel of $\theta_{r,r}$ has to contain ${\mathcal N} \dotplus \{ 0 \} \subseteq {\mathcal M}^{\bot\bot}$ in $\mathcal L$.
We arrived at a contradiction, which can only be solved recognizing the non-existence of such specific Hilbert $A$-modules $\mathcal L$ and, consequently, $\mathcal K$. 
\end{proof}

The assertions of Theorem \ref{thm_no_lift} are still true, if $\mathcal M$ will be replaced by ${\rm Ker}(r)$, which might be a larger subset of $\mathcal N$. 
For Hilbert C*-modules over W*-algebras or over monotone complete C*-algebras the C*-valued inner product always lifts to the $A$-dual Banach $A$-module turning it into a self-dual Hilbert $A$-module, cf.~\cite{Frank_2024}. Consequently, the discussed type of counter-example cannot be constructed in those cases, cf.~\cite{Frank_2024,M_2022}. In particular, the two basic assumptions of \cite[Thm.~13, Cor.~14]{M_2022} are contradictory, they cannot be fulfilled at the same time. 

\begin{thm} \label{thm_other_inner_products}
Let $A$ be a C*-algebra and ${\mathcal M} \subset \mathcal N$ be two Hilbert $A$-modules such that ${\mathcal M}^\bot = \{ 0 \}$ with respect to $\mathcal N$. Denote the $A$-valued inner product on $\mathcal N$ by $\langle .,. \rangle$. Let $r: {\mathcal N} \to A$ be a non-zero bounded $A$-linear functional such that $r \big|_{\mathcal M} = 0$. Suppose $r$ is non-degenerate,  i.e.~$ar=0$ if and only if $a=0$ for $a \in A$, and ${\mathcal N} \cap Ar = \{ 0 \}$ in the $A$-dual Banach $A$-module ${\mathcal N}'$ of $\mathcal N$. 
Then there exists at least one other $A$-valued inner product $\langle .,. \rangle_1$ on $\mathcal N$ such that it coincides on $\mathcal M$, induces an equivalent norm on $\mathcal N$ and is not unitarily equivalent to $\langle .,. \rangle$. 
\end{thm}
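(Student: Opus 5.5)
The natural candidate is the inner product
\[
\langle x,y\rangle_1 = \langle x,y\rangle + r(x)r(y)^* \, , \qquad x,y\in\mathcal N .
\]
I will show that it has the three required properties and then argue that it cannot be unitarily equivalent to $\langle\cdot,\cdot\rangle$.

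First, $\langle\cdot,\cdot\rangle_1$ is an $A$-valued inner product. Since $r$ is $A$-linear, $r(ax)r(y)^*=a\,r(x)r(y)^*$, so the new form is $A$-linear in the first entry. It is also hermitian. Positivity holds because $r(x)r(x)^*\ge 0$, and definiteness follows from $\langle x,x\rangle_1\ge\langle x,x\rangle$.

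Second, the two inner products agree on $\mathcal M$, since $r|_{\mathcal M}=0$ kills the correction term there.

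Third, the norms are equivalent. The estimate
\[
\langle x,x\rangle\le\langle x,x\rangle_1\le(1+\|r\|^2)\langle x,x\rangle
\]
gives $\|x\|\le\|x\|_1\le(1+\|r\|^2)^{1/2}\|x\|$. In particular $\mathcal N$ is complete for $\|\cdot\|_1$, so $(\mathcal N,\langle\cdot,\cdot\rangle_1)$ is again a Hilbert $A$-module.

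It remains to rule out unitary equivalence, which I read in the sense of \cite{F_1999_1}. I will argue by contradiction and suppose there is a bounded $A$-linear bijection $U:\mathcal N\to\mathcal N$ with $\langle Ux,Uy\rangle=\langle x,y\rangle_1$ for all $x,y$. Then $(\mathcal N,\langle\cdot,\cdot\rangle_1)$ is a Hilbert $A$-module isometrically containing $\mathcal N$ through the identity map. The plan is to exhibit $r$ there as an element representing itself via the inner product, and then invoke Theorem~\ref{thm_no_lift}. The corresponding characterization is an adjointable positive invertible operator $T$ on $(\mathcal N,\langle\cdot,\cdot\rangle)$ with $\langle x,y\rangle_1=\langle Tx,y\rangle$, namely $T=U^*U$. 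This gives
\[
\langle (T-\mathrm{id})x,y\rangle = r(x)r(y)^* \quad\text{for all } x,y\in\mathcal N .
\]
Fix $y$ with $r(y)$ non-zero and set $a=r(y)$. The displayed identity says that the functional $x\mapsto r(x)a^*$ is represented by the vector $(T-\mathrm{id})^*y\in\mathcal N$. Hence $a^*r$, viewed in $\mathcal N'$ in the module structure of the paper, lies in $Ar\cap\mathcal N$. By hypothesis that intersection is $\{0\}$, so $a^*r=0$, and non-degeneracy of $r$ forces $a=0$, a contradiction.

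The hardest step is justifying that $U^*$ exists. In general $U$ is only a bounded module bijection between two inner products, not an adjointable operator. If the intended notion of unitary equivalence is just an isometric module isomorphism $(\mathcal N,\langle\cdot,\cdot\rangle_1)\cong(\mathcal N,\langle\cdot,\cdot\rangle)$, I cannot directly produce a $T$ with $\langle x,y\rangle_1=\langle Tx,y\rangle$ from an arbitrary such isomorphism. In that case the argument via $T$ only shows that $\langle\cdot,\cdot\rangle_1$ is not of the form $\langle T\cdot,\cdot\rangle$ with $T$ adjointable for $\langle\cdot,\cdot\rangle$.

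To obtain the stronger statement I would try to transport the obstruction through $U$ instead. The target conclusion is that, under $U$, the functional $r$ (whose norm squared is controlled by $\langle\cdot,\cdot\rangle_1$) becomes representable in a Hilbert module containing $\mathcal N$ isometrically, contradicting Theorem~\ref{thm_no_lift}. Concretely, I would check whether $r\circ U^{-1}$ inherits non-degeneracy and the property $Ar\cap\mathcal N=\{0\}$. This transport is the point where I am least certain the argument closes.
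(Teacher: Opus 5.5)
Your inner product and your argument with $T$ are essentially the paper's proof. The paper's form $((\mathrm{id}+S_{r,r})(x))(y)$ is your $\langle x,y\rangle+r(x)r(y)^*$, up to which slot is linear. It uses the same norm estimate, and it concludes non-equivalence only from $\mathrm{id}+S_{r,r}\notin\mathrm{End}_A^*(\mathcal N)$, i.e.\ that the form is not $\langle T\cdot,T\cdot\rangle$ for any $T\in\mathrm{End}_A^*(\mathcal N)$. The paper leans on ${\rm Ker}(S_{r,r})={\rm Ker}(r)$ to get that non-adjointability. You get it directly from $Ar\cap\mathcal N=\{0\}$ and non-degeneracy, since $(T-\mathrm{id})^*y$ would represent a non-zero element of $Ar$. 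In that sense, with equivalence implemented by an operator adjointable for $\langle\cdot,\cdot\rangle$, your proof is complete and matches the paper's.

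The step you flag as hardest cannot be carried out, and the stronger reading should be abandoned rather than repaired. Suppose a module bijection $U$ with $\langle Ux,Uy\rangle=\langle x,y\rangle_1$ had an adjoint $U^\dagger$ for $\langle\cdot,\cdot\rangle$. Then $T=U^\dagger U$ would satisfy your hypothesis, so your own argument shows such $U$ are never adjointable. Worse, such $U$ can exist. Take the Kaad--Skeide data ($A=C([0,1])$, $\mathcal N=\ell_2(A)$) and pass to $\mathcal M\oplus\ell_2(A)\subset\mathcal N\oplus\ell_2(A)$ with the functional $(x,y)\mapsto r(x)$. All hypotheses persist. The new module $(\mathcal N,\langle\cdot,\cdot\rangle_1)\oplus\ell_2(A)$ is a countably generated module plus $\ell_2(A)$. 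By Kasparov's stabilization theorem it is unitarily isomorphic to $\ell_2(A)$, and hence to $(\mathcal N\oplus\ell_2(A),\langle\cdot,\cdot\rangle)=\ell_2(A)\oplus\ell_2(A)$. So, if unitary equivalence means an arbitrary isometric module isomorphism, this $\langle\cdot,\cdot\rangle_1$ can be unitarily equivalent to $\langle\cdot,\cdot\rangle$. No transport of $r$ through $U$ and Theorem~\ref{thm_no_lift} can then yield a contradiction. State the adjointable reading explicitly, which is the one the paper's proof actually establishes, and drop your final paragraph.
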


\begin{proof}
Consider the bounded $A$-linear map $S_{r,r}(z) = r(z)r$ mapping $\mathcal N$ to ${\mathcal N}'$. By a result by Huaxin Lin $S_{r,r}$ can be seen as a quasi-multiplier of the C*-algebra ${\rm K}_A({\mathcal N})$ of ``compact'' operators on $\mathcal N$, where the initial $A$-valued inner product is used, cf.~\cite[Thms.~1.5, 1.6, 1.7]{Lin}. A simple calculation shows $\theta_{x,y}^* \circ S_{r,r} \circ \theta_{x,y} = \theta_{(r(y)r(y)^*)^{1/2} x, (r(y)r(y)^*)^{1/2} x} \geq 0$ for any $x,y \in \mathcal N$. Therefore, relative to any faithful $*$-representation of ${\rm K}_A({\mathcal N})$ on a Hilbert space $S_{r,r}$ is a positive operator on it. The kernel of $S_{r,r}$ in $\mathcal N$ equals the kernel of $r$, i.e.~it is norm-closed, but not orthogonally complemented and contains $\mathcal M$. 

Define the second $A$-valued inner product by $\langle x,y \rangle_1 = (({\mathrm id}+S_{r,r})(x))(y)$ for $x,y \in \mathcal N$. Then
\[ 
     \|x\| \leq \|x\|_1 \leq (1+\|r\|^2)^{1/2} \|x\|
\]
for any $x \in \mathcal N$. Obviously, both these $A$-valued inner products induce equivalent norms on $\mathcal N$, coincide on $\mathcal M$ and are different on $\mathcal N$. Since $({\mathrm id}+S_{r,r})$ is not in ${\rm End}_A^*({\mathcal N})= M({\rm K}_A({\mathcal N}))$, it cannot be represented as $T^*T$ for any $T \in {\rm End}_A^*({\mathcal N})$. So both these $A$-valued inner products are not unitarily equivalent. 
\end{proof}

This observation complements earlier examples of this kind given by L.~G.~Brown \cite[Ex.~6.2, 6.3, 7.2]{Brown} in 1985 and by the first author \cite[Ex.~3.1, 4.3, 7.2]{F_1999_1} in 1999.

\smallskip
We now return to the abundance of examples in the Kaad--Skeide situation with $A=C([0,1])$.  (In more general situations, we can always assume $\langle {\mathcal M}, {\mathcal M} \rangle = \langle {\mathcal N}, {\mathcal N} \rangle = A$ adding a direct orthogonal summand $A$ to $\mathcal M$ and to $\mathcal N$, if necessary.)
Recall that in the construction above the pair $({\mathcal M}\subseteq{\mathcal N})$ and the special separating functional $r:{\mathcal N}\to A$ are obtained from a fixed dense subset $Q=\{q_k : k\in\mathbb N\}\subset (0,1)$ via the decomposition \eqref{decomp} and the systems $\Psi_{q_k}\in \ell_2(A)'\setminus\ell_2(A)$, see \eqref{funct_n} and \eqref{funct}.  Different choices of $Q$ give rise to different examples.

\begin{thm}\label{thm:many-KS-functionals}
Let $A=C([0,1])$ and set ${\mathcal N}=\ell_2(A)$. There exist uncountably many pairs $({\mathcal M}_\alpha,r_\alpha)$, indexed by a set $I$ of cardinality continuum, such that for each $\alpha\in I$:
\begin{enumerate}
\item ${\mathcal M}_\alpha$ is a full Hilbert $A$-submodule of ${\mathcal N}$ with $({\mathcal M}_\alpha)^\perp=\{0\}$ in ${\mathcal N}$,
\item $r_\alpha:{\mathcal N}\to A$ is a non-zero bounded $A$-linear functional with $r_\alpha\big|_{{\mathcal M}_\alpha}=0$,
\item $r_\alpha$ is non-degenerate, $Ar_\alpha\cap{\mathcal N}=\{0\}$, and ${\rm Ker} (r_\alpha)$ is norm-closed but not biorthogonally complemented in ${\mathcal N}$,
\item for $\alpha\neq\beta$ in $I$, the pairs $({\mathcal M}_\alpha,r_\alpha)$ and $({\mathcal M}_\beta,r_\beta)$ are not unitarily isomorphic in the sense that there is no unitary $U\in{\mathcal L}({\mathcal N})$ with $U({\mathcal M}_\alpha)={\mathcal M}_\beta$ and $r_\beta = r_\alpha\circ U^{-1}$.
\end{enumerate}
In particular, there are uncountably many mutually non-isomorphic Kaad--Skeide type examples attached to different dense subsets of $[0,1]$.
\end{thm}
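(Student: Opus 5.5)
We will index the examples by dense countable subsets of $(0,1)$ and separate them through an invariant of the functional $r_Q$ that a unitary cannot change. Let $I$ be a family of countable dense subsets $Q_\alpha=\{q^\alpha_k\}\subset(0,1)$ of cardinality continuum, such that the sets differ pairwise. One concrete choice is $Q_\alpha=(\mathbb Q\cap(0,1))\cup\{\alpha\}$ for irrational $\alpha\in(0,1)$.

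For each $\alpha$ we run the Kaad--Skeide construction of Section \ref{sec5} with $Q_\alpha$. The generators \eqref{generators} give ${\mathcal M}_\alpha$, and \eqref{funct} gives $r_\alpha$. To make ${\mathcal M}_\alpha$ full we add the orthogonal summand $A$, as remarked before the theorem; alternatively we note directly that $\langle\rho_{k,k},\rho_{k,k}\rangle\ge 1$ pointwise, so it is invertible and fullness holds. Property (1) is then \cite[Lem.~2.1]{KaSk}, and (2) is the computation $r_\alpha(\rho_{k,\ell})=0$ with $\|r_\alpha\|=2$. Non-degeneracy in (3) follows because the first component of $r_\alpha$ is $1_A$, so $ar_\alpha=0$ forces $a=0$. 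The relation $Ar_\alpha\cap\mathcal N=\{0\}$ is Theorem \ref{kernel_special}. The same theorem gives that ${\rm Ker}(r_\alpha)$ is closed and not biorthogonally complemented. These steps are routine.

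The substance is (4), for which we need a unitary invariant. Since $r_\alpha\in\ell_2(A)'$, the functional $r_\alpha$ is a sequence $(b_i)$ in $A$, and we form $h_\alpha=\sum_i b_ib_i^*\in A''$. The limit exists pointwise, and more precisely
\[
h_\alpha(t)=1+\sum_k 4^{-k}\chi_{[0,q^\alpha_k)}(t).
\]
This function is discontinuous exactly at the points $q^\alpha_k$, since each jump has positive height $4^{-k}$ and the sum converges uniformly. Hence the set of discontinuity points of $h_\alpha$ is exactly $Q_\alpha$.

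We claim that any unitary $U\in{\mathcal L}(\mathcal N)$ with $r_\beta=r_\alpha\circ U^{-1}$ satisfies $h_\beta=h_\alpha$. The argument is as follows. $U$ extends to $\mathcal N'$ by $U'(f)=f\circ U^{*}$, and $U'$ preserves the ``inner product'' $\langle f,f\rangle$, which is defined as the strict or pointwise limit $\sum_i f(e_i)f(e_i)^*$. To see this, note that $\langle f,f\rangle$ can be computed as $\sup_F$ of $f$ composed with the finite-rank projections onto spans of orthonormal families. Equivalently, for each $t$ we evaluate at the character $t$ and reduce to Hilbert spaces: the localization $\mathcal N_t$ is $\ell_2$, $U_t$ is unitary, and $h_\alpha(t)=\|r_{\alpha,t}\|^2$ with $r_{\alpha,t}$ a bounded functional on $\mathcal N_t$. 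Unitary invariance of norms of Hilbert space functionals then gives $h_\beta(t)=h_\alpha(t)$ for every $t$. Consequently $Q_\beta=Q_\alpha$ and $\alpha=\beta$.

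The main obstacle is making this localization precise. We need the fiberwise functional norm at $t$ to equal the pointwise value of $\sum_k 4^{-k}\chi_{[0,q_k)}$, with the left-closed, right-open intervals mattering at $t=q_k$. One must check that evaluating at $t$ commutes with the infinite sum, and that a unitary on $\ell_2(C[0,1])$ induces unitaries on the fibers $\mathcal N\otimes_A\mathbb C_t\cong\ell_2$; the latter is standard. If the pointwise values at the jump points prove delicate, it suffices to compare $h_\alpha$ on the dense set where both sides are continuous and use one-sided limits, which recover the jump locations and heights.
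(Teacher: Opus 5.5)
Your proof is correct, and for item (iv) it takes a genuinely different and more rigorous route than the paper. The paper uses the translates $Q_\alpha=\{q_k+\alpha \pmod 1\}$ of the rationals, with $\alpha$ ranging over a continuum of irrationals with pairwise irrational differences. It then only asserts that a unitary $U$ with $r_\beta=r_\alpha\circ U^{-1}$ would have to preserve the ``support behaviour'' of the functionals, which differs because the discontinuities sit at different places. It names no invariant and gives no reason why a unitary must respect one.

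You supply exactly this missing piece: $h_r(t)=\sum_i|r(e_i)(t)|^2=\|r_t\|^2$, the squared norm of the fibre functional on $\mathcal N\otimes_A\mathbb C_t\cong\ell_2$. The localisation you list as the main obstacle is in fact immediate.
\begin{itemize}
\item For $x\in\ell_2(A)$ the series $r(x)=\sum_i x_ib_i^*$ converges in norm by the pointwise Cauchy--Schwarz inequality. Hence $r(x)(t)=\sum_i x_i(t)\overline{b_i(t)}$.
\item So $r_t$ is a well-defined functional on $\ell_2$, whose norm is exactly $h_r(t)^{1/2}$ at every $t$. Constant sequences show the fibre is all of $\ell_2$.
\item This includes the jump points, since $\sum_\ell\psi_{q_k,\ell}(q_k)^2=0=\chi_{[0,q_k)}(q_k)$.
\item $U_t(x(t)):=(Ux)(t)$ is a well-defined surjective isometry, simply because $\langle Ux,Ux\rangle(t)=\langle x,x\rangle(t)$ and $U$ is onto.
\end{itemize}
Hence $r_{\beta,t}=r_{\alpha,t}\circ U_t^{-1}$ and $h_\beta=h_\alpha$, and the discontinuity set of $h_\alpha$ returns $Q_\alpha$.

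Your route buys three things:
\begin{itemize}
\item an actual proof of (iv);
\item a stronger conclusion, since the condition $U(\mathcal M_\alpha)=\mathcal M_\beta$ is never used;
\item the freedom to take any continuum of pairwise distinct dense countable sets, such as your $(\mathbb Q\cap(0,1))\cup\{\alpha\}$. The paper's number-theoretic choice of translates becomes unnecessary, though your invariant also covers that family.
\end{itemize}

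You handle items (1)--(3) as the paper does. For the kernel, the cleanest justification is ${\rm Ker}(r_\alpha)^\perp\subseteq\mathcal M_\alpha^\perp=\{0\}$, hence ${\rm Ker}(r_\alpha)^{\perp\perp}=\mathcal N\neq{\rm Ker}(r_\alpha)$. The proof of Theorem \ref{kernel_special} does not actually spell this out.

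One minor slip: your own formula gives $\|r_\alpha\|^2=\sup_t h_\alpha(t)=h_\alpha(0)=4/3$. So the value $\|r_\alpha\|=2$ taken over from the text is wrong, although nothing depends on it.
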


\begin{proof}
Fix once and for all a specific dense countable subset $Q\subset (0,1)$, for instance, the rational numbers in $(0,1)$. Using $Q$ in the Kaad--Skeide construction and its analysis above, we obtain a pair $({\mathcal M},r)$ with the stated properties, see the discussion after \eqref{funct}.  

Now let $I$ be any subset of the irrational numbers in $(0,1)$ such that $\alpha-\beta\notin\mathbb Q$ for all distinct $\alpha,\beta\in I$. The set $I$ can be chosen of cardinality continuum. For each $\alpha\in I$, replace the points of $Q$ by their translates $q_k+\alpha$ (mod $1$) in $(0,1)$ 
\[
      Q_\alpha := \{\, q_k + \alpha \!\!\!\pmod{1} : k\in\mathbb N \,\}\subset (0,1),
\]
so as to obtain a new dense countable set $Q_\alpha\subset (0,1)$, and repeat the Kaad--Skeide construction with $Q_\alpha$ in place of $Q$. This yields a pair $({\mathcal M}_\alpha,r_\alpha)$ with the properties (i)--(iii) by the same arguments as for $({\mathcal M},r)$.

If $\alpha\neq\beta$ in $I$, then the corresponding dense sets $Q_\alpha$ and $Q_\beta$ are not translates of each other by any rational number, and the associated characteristic functions $\{\chi_{[0,q]} : q\in Q_\alpha\}$ and $\{\chi_{[0,q]} : q\in Q_\beta\}$ in $A=C([0,1])$ have different distributions of discontinuities. Consequently, there is no unitary $U\in{\mathcal L}({\mathcal N})$ implementing simultaneously an $A$-module isomorphism ${\mathcal M}_\alpha\to{\mathcal M}_\beta$ and the identification $r_\beta = r_\alpha\circ U^{-1}$, because such a $U$ would intertwine the respective systems of generators in a way that preserves the resulting support behaviour of the functionals. Thus condition~(iv) holds. Since $I$ has cardinality continuum, the conclusion follows.
\end{proof}

Of course, there could exist further examples with a different construction, but with the same resulting characteristic properties. For example, one could replace finitely many elements of $Q$ by a set of irrational numbers of $(0,1)$, or select an initial norm-dense in $(0,1)$ subset $Q'$ in an even different way. 

We now introduce a natural equivalence relation on special separating functionals associated with a fixed inclusion of Hilbert $A$-modules
${\mathcal M}\subseteq{\mathcal N}$. This allows us to distinguish functionals that are genuinely different from those arising merely by changing coordinates through an adjointable automorphism preserving the geometry of the inclusion.

\begin{dfn}
Let $A$ be a C*-algebra, and let ${\mathcal M}\subseteq{\mathcal N}$ be Hilbert $A$-modules with ${\mathcal M}^\bot=\{0\}$ relative to ${\mathcal N}$.  Denote by $\mathcal R$ the class of all bounded $A$-linear maps $r:{\mathcal N}\to A$ such that
\begin{enumerate}
\item $r\neq 0$,
\item $r\big|_{\mathcal M}=0$,
\item $r$ is non-degenerate, i.e. $ar=0$ in ${\mathcal N}'$ implies $a=0$ for all $a\in A$,
\item $Ar\cap{\mathcal N}=\{0\}$ in the canonical embedding ${\mathcal N}\hookrightarrow{\mathcal N}'$, 
\item the kernel ${\rm Ker}(r)$ is norm-closed but not biorthogonally complemented in ${\mathcal N}$.
\end{enumerate}

Theorem \ref{kernel_special} shows that the condition $Ar\cap{\mathcal N}=\{0\}$ in the canonical embedding ${\mathcal N}\hookrightarrow{\mathcal N}'$ is redundant, i.e. it follows from the other assumptions.
      
We call elements of $\mathcal R$ \emph{special separating functionals} for the pair $({\mathcal M}\subseteq{\mathcal N})$.
\end{dfn}

\begin{dfn}
Let $r_1,r_2\in\mathcal R$.  We say that $r_1$ and $r_2$ are \emph{equivalent} and write $r_1\sim r_2$ if there exists a bounded invertible adjointable $A$-linear operator $U\in {\rm End}_A^*({\mathcal N})$ such that
\begin{enumerate}
\item $U\big|_{\mathcal M}$ is unitary on $\mathcal M$,
\item $r_2 = r_1\circ U^{-1}$, i.e. $r_2(n)=r_1(U^{-1}n)$ for all $n\in{\mathcal N}$.
\end{enumerate}
\end{dfn}

In light of Theorem \ref{thm_other_inner_products} and the results in \cite{F_1999_1}, an assumption of $U$ being unitary on $\mathcal N$ would fix the $A$-valued inner product on $\mathcal N$. Even the assertion on $U$ to be merely isometric adjointable on $\mathcal N$ would result in this since the norm-values of Hilbert C*-module elements allows one  to reconstruct the initial C*-valued inner product values of them the norm was derived from. 

\begin{rmk} 
Condition (ii) can be restated as $r_2 \circ U=r_1$, i.e. $U$ transports $r_2$ to $r_1$.
\end{rmk}

\begin{prop}
The relation $\sim$ is an equivalence relation on $\mathcal R$.
\end{prop}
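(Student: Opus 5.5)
We verify the three axioms of an equivalence relation directly from the definition of $\sim$. Throughout we use the class $\mathcal G$ of all bounded invertible adjointable $A$-linear operators $U\in{\rm End}_A^*({\mathcal N})$ whose restriction to $\mathcal M$ is a unitary of $\mathcal M$. In particular $U(\mathcal M)=\mathcal M$ and $\langle Ux,Uy\rangle=\langle x,y\rangle$ for $x,y\in\mathcal M$. The plan is to show that $\mathcal G$ is a group under composition. Then reflexivity, symmetry and transitivity correspond to the identity, inverses and products in $\mathcal G$. Note that the relation $r_2=r_1\circ U^{-1}$ does not require checking that $r_2\in\mathcal R$: both $r_1$ and $r_2$ are assumed to lie in $\mathcal R$ from the start.

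\emph{Reflexivity.} Take $U={\rm id}_{\mathcal N}$. It is adjointable and invertible, and its restriction to $\mathcal M$ is the identity, which is unitary. Clearly $r=r\circ{\rm id}^{-1}$.

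\emph{Symmetry.} Suppose $r_2=r_1\circ U^{-1}$ with $U\in\mathcal G$. Then $r_1=r_2\circ U=r_2\circ (U^{-1})^{-1}$, so it suffices to check $U^{-1}\in\mathcal G$. First, $U^{-1}$ is bounded by the open mapping theorem. It is $A$-linear and adjointable with $(U^{-1})^*=(U^*)^{-1}$, and this is a routine check from $U^*(U^{-1})^*=(U^{-1}U)^*={\rm id}$ and the analogous identity in the other order. Second, because $U|_{\mathcal M}$ is a unitary of $\mathcal M$, it maps $\mathcal M$ onto $\mathcal M$. Hence $U^{-1}$ maps $\mathcal M$ onto $\mathcal M$, and $U^{-1}|_{\mathcal M}=(U|_{\mathcal M})^{-1}$ is again a unitary of $\mathcal M$.

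\emph{Transitivity.} Suppose $r_2=r_1\circ U^{-1}$ and $r_3=r_2\circ V^{-1}$ with $U,V\in\mathcal G$. Then $r_3=r_1\circ (VU)^{-1}$. The product $VU$ is bounded, invertible and adjointable with $(VU)^*=U^*V^*$. It maps $\mathcal M$ onto $\mathcal M$, and its restriction there is $V|_{\mathcal M}\circ U|_{\mathcal M}$, a product of unitaries of $\mathcal M$, hence unitary.

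The only point needing care, which I regard as the main (mild) obstacle, is the meaning of ``$U|_{\mathcal M}$ is unitary on $\mathcal M$''. I would read it as requiring $U(\mathcal M)=\mathcal M$ and preservation of the inner product on $\mathcal M$. Under this reading unitarity is automatically inherited by inverses and products. If instead one only assumed $U(\mathcal M)\subseteq\mathcal M$ isometrically, $U^{-1}$ might fail to map $\mathcal M$ into $\mathcal M$. I would therefore state explicitly at the outset that surjectivity onto $\mathcal M$ is part of the definition. The remaining verifications are purely formal.
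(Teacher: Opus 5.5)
Your proof is correct and follows the paper's own argument: identity for reflexivity, inverse for symmetry, composition for transitivity. It also improves on the paper in two small ways. Your formula $r_3=r_1\circ(VU)^{-1}$ has the correct order, whereas the paper writes $(UV)^{-1}$ (a harmless slip, since both products qualify). And you make explicit the requirement $U(\mathcal M)=\mathcal M$, which the paper uses tacitly when it asserts that $U^{-1}$ is again unitary on $\mathcal M$.

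One nit: your check $U^*(U^{-1})^*=(U^{-1}U)^*$ presupposes that $U^{-1}$ is already adjointable. This is automatic if ``invertible'' means invertible in ${\rm End}_A^*(\mathcal N)$. Otherwise use the closed-range theorem for adjointable operators: a bijective adjointable $U$ has bijective $U^*$, and $(U^*)^{-1}$ is then the adjoint of $U^{-1}$.
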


\begin{proof}
Reflexivity holds with $U=\mathrm{id}_{\mathcal N}$.  
If $r_2=r_1\circ U^{-1}$ with $U$ unitary on $\mathcal M$ and bounded invertible adjointable on $\mathcal N$ then $r_1=r_2\circ U$ and $U^{-1}$ is again unitary on $\mathcal M$ and bounded invertible adjointable on $\mathcal N$, so symmetry holds.  
If $r_2=r_1\circ U^{-1}$ and $r_3=r_2\circ V^{-1}$ for unitary on $\mathcal M$ and bounded invertible adjointable on $\mathcal N$ operators $U,V\in {\rm End}_A^*({\mathcal N})$, then $r_3=r_1\circ(UV)^{-1}$ and $UV$ is unitary on $\mathcal M$, and bounded invertible adjointable on $\mathcal N$, so transitivity holds.
\end{proof}
The equivalence relation above naturally leads to the problem of classifying special separating functionals up to equivalence. At present, very little is known about the size of the quotient set $\mathcal R/\!\sim$.

\begin{conj} \label{question_equiv_classes}
How many equivalence classes of special separating functionals does the quotient $\mathcal R/\!\sim$ contain? In particular, are there infinitely many, or even uncountably many, pairwise non-equivalent special separating functionals?
Our results do not provide a definitive answer to this question. Nevertheless, we conjecture that $\mathcal R/\!\sim$ is uncountable whenever it is non-empty.
\end{conj}


\subsection{The related module operator Banach algebras}

We are going to characterize and compare the C*-algebras of ``compact'', of bounded adjointable, and of bounded module operators on $\mathcal M$ and on $\mathcal N$ for the specific type of examples given by J.~Kaad and M.~Skeide.

\begin{prop} {\rm (Cf.~\cite[Lemma 2.13]{Lin}.)} \label{compact_embed} \newline
Let $A$ be a C*-algebra and let $\mathcal M$ and $\mathcal N$ be two Hilbert $A$-modules such that $\mathcal M$ is a Hilbert $A$-submodule of $\mathcal N$. The the C*-algebra of all ``compact'' operators ${\rm K}_A({\mathcal M})$ is canonically $*$-isometrically embedded into the C*-algebra of all ``compact'' operators ${\rm K}_A({\mathcal N})$ as a hereditary C*-subalgebra.
\end{prop}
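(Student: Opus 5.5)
The plan is to realize ${\rm K}_A({\mathcal M})$ inside ${\rm K}_A({\mathcal N})$ through the elementary operators. For $x,y\in\mathcal M$, the operator $\theta_{x,y}(z)=\langle z,y\rangle x$ can be read on $\mathcal N$ as well, since the formula only uses the inner product of $\mathcal N$, whose restriction to $\mathcal M$ is the inner product of $\mathcal M$. We define $\iota(\theta^{\mathcal M}_{x,y})=\theta^{\mathcal N}_{x,y}$ and extend linearly to finite sums. The key identities hold on both sides with the same coefficients:
\[
\theta_{x,y}\theta_{u,v}=\theta_{\langle x,v\rangle ... }
\]
More precisely, $\theta_{x,y}\circ\theta_{u,v}=\theta_{\langle u,y\rangle x,v}$ and $\theta_{x,y}^*=\theta_{y,x}$. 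These show that $\iota$ is a $*$-homomorphism on the algebraic span, provided it is well defined.

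The main step is to show that $\iota$ is well defined and isometric on finite sums. Take $T=\sum_i\theta_{x_i,y_i}$ with $x_i,y_i\in\mathcal M$, and compare the norm of $T$ on $\mathcal M$ with that on $\mathcal N$. The C*-identity gives $\|T\|^2=\|T^*T\|$, and $T^*T$ is again a finite sum of $\theta$'s with entries in $\mathcal M$. So it suffices to compare norms of positive elements, or to compute the norm through inner products. Using $z\mapsto\langle Tz,Tz\rangle$, I would argue as follows. Since the range of $T$ on $\mathcal N$ lies in $\mathcal M$, we have
\[
\|T\|_{\mathcal N}=\sup_{\|z\|\le1,\,z\in\mathcal N}\|\langle T^*Tz,z\rangle\|^{1/2}.
\]
Writing $T^*T=\sum_{j,k}\theta_{y_j,\,y_k c_{kj}}$-type expressions, where $c=(\langle x_k,x_j\rangle)$, one sees that
\[
\langle T^*Tz,z\rangle=\mathbf{v}\,c\,\mathbf{v}^*,\qquad \mathbf v=(\langle z,y_j\rangle)_j .
\]
Here $\mathbf v$ ranges over rows with $\|\sum_j v_jv_j^*\|\le \|z\|^2\|\sum\theta_{y_j,y_j}\|$.

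This reduction is where I expect the main obstacle: showing that the set of such rows $\mathbf v$ obtained from the unit ball of $\mathcal N$ gives the same supremum as from the unit ball of $\mathcal M$. The first approach I would try is an approximate unit argument. Let $e_\lambda$ be an approximate unit of ${\rm K}_A({\mathcal M})$ consisting of finite sums of $\theta$'s with entries in $\mathcal M$, and view it on $\mathcal N$. Then $\theta^{\mathcal N}_{x,y}e_\lambda\to\theta^{\mathcal N}_{x,y}$ in norm, because $\theta_{x,y}e_\lambda=\theta_{x,e_\lambda^* y}$ and $e_\lambda^*y\to y$ for $y\in\mathcal M$. Hence $Tz=\lim Te_\lambda z$, and $e_\lambda z\in\mathcal M$ with $\|e_\lambda z\|\le\|z\|$. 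The inequality $\|e_\lambda z\|\le\|z\|$ needs $\|e_\lambda\|_{\mathcal N}\le1$. I would obtain it by taking $e_\lambda=\sum_i\theta_{u_i,u_i}$ with $\|\sum_i\langle\cdot ,u_i\rangle u_i\|\le 1$, which is a property expressible purely through the Gram matrix $(\langle u_i,u_j\rangle)$, as in Lance's construction. This gives $\|T\|_{\mathcal N}\le\|T\|_{\mathcal M}$, and the reverse inequality is trivial by restriction.

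Once $\iota$ is isometric on the dense span, it is well defined, extends to an isometric $*$-homomorphism ${\rm K}_A({\mathcal M})\to{\rm K}_A({\mathcal N})$, and its image is the closed span $B$ of $\{\theta_{x,y}:x,y\in\mathcal M\}$. To see that $B$ is hereditary, it suffices to check $B\,{\rm K}_A({\mathcal N})\,B\subseteq B$, which is enough for closed subalgebras. This holds because
\[
\theta_{x,y}\,\theta_{u,v}\,\theta_{s,t}=\theta_{\langle u,y\rangle x,\ \langle t,v\rangle^* \ldots}
\]
More precisely, $\theta_{x,y}\theta_{u,v}\theta_{s,t}=\theta_{\langle\langle s,v\rangle u,y\rangle x,\,t}$, whose entries $\langle\langle s,v\rangle u,y\rangle x$ and $t$ lie in $\mathcal M$ since $x,t\in\mathcal M$. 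Linearity and continuity then finish the argument. Equivalently, $B=p\,{\rm K}_A({\mathcal N})\,p$ in the bidual, where $p$ is the support projection, which gives the same conclusion.
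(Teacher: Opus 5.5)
The paper gives no proof of this proposition; it only cites Lin's Lemma 2.13. Your argument is therefore a self-contained route, and it is correct.

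The hereditary part is the standard argument: you show $B\,{\rm K}_A({\mathcal N})\,B\subseteq B$ using $\theta_{x,y}\theta_{u,v}\theta_{s,t}=\theta_{\langle s,v\rangle\langle u,y\rangle x,\,t}$.

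For the isometry, everything hinges on your claim that $\|\sum_i\theta_{u_i,u_i}\|$ is the same on ${\mathcal M}$ and on ${\mathcal N}$. You state this but do not prove it. It is true and takes one line. Put $\Phi:{\mathcal N}\to A^n$, $z\mapsto(\langle z,u_i\rangle)_i$. Then $\sum_i\theta_{u_i,u_i}=\Phi^*\Phi$, so its norm is $\|\Phi\Phi^*\|$. Moreover $\Phi\Phi^*$ is the operator $(a_j)_j\mapsto(\sum_j a_j\langle u_j,u_i\rangle)_i$ on $A^n$, which depends only on the Gram matrix.

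Once you have this observation, the approximate unit is unnecessary. Write $T=\sum_i\theta_{x_i,y_i}=XY^*$, where $X,Y:A^n\to{\mathcal N}$ send $(a_i)$ to $\sum_i a_ix_i$ and $\sum_i a_iy_i$ respectively. Then $\|T\|^2=\|YX^*XY^*\|=\|(X^*X)^{1/2}Y^*Y(X^*X)^{1/2}\|$, which involves only the two Gram matrices.

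A route with no norm computations at all is also available:
\begin{itemize}
\item Every $b\in B$ maps ${\mathcal N}$ into ${\mathcal M}$. Hence restriction $b\mapsto b|_{\mathcal M}$ is a $*$-homomorphism $B\to{\rm K}_A({\mathcal M})$.
\item Its range is closed, as for any $*$-homomorphism, and dense. So it is all of ${\rm K}_A({\mathcal M})$.
\item It is injective: $b|_{\mathcal M}=0$ forces $bb^*=0$, because $b^*{\mathcal N}\subseteq{\mathcal M}$.
\item Being an injective $*$-homomorphism, it is isometric, and its inverse is your $\iota$.
\end{itemize}
This is probably the shortest complete proof.

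Your closing remark is inaccurate. A hereditary subalgebra satisfies $B=p\,{\rm K}_A({\mathcal N})^{**}p\cap{\rm K}_A({\mathcal N})$, not $B=p\,{\rm K}_A({\mathcal N})\,p$. The latter is in general not even contained in ${\rm K}_A({\mathcal N})$; take ${\mathcal N}=C([0,1])$ and ${\mathcal M}=C_0((0,1])$, for instance. Since this remark is not used anywhere, it does not affect the argument.
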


\begin{prop}  {\rm (Cf.~\cite[Thm.~3.1]{KaSk}.)} \label{compact_repres} \newline
Let $A$ be a unital C*-algebra, let ${\mathcal M} \subset {\mathcal N}$ be two Hilbert $A$-modules such that ${\mathcal M}^\bot = \{ 0 \}$ with respect to $\mathcal N$, and there exists a non-zero bounded $A$-linear map $r: {\mathcal N} \to A$ such that $r \big|_{\mathcal M} = 0$. Then ${\rm K}_A({\mathcal N})$ admits a faithful non-degenerate $*$-representation on a Hilbert space $H$ such that the $*$-isometrically embedded copy of ${\rm K}_A({\mathcal M})$ is only faithfully degeneratedly represented on $H$. 
\end{prop}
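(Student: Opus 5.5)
The plan is to build the representation as a GNS-type construction from the functional $r$, realized inside a Hilbert space on which ${\rm K}_A({\mathcal N})$ acts faithfully. I would use the standard picture ${\rm K}_A({\mathcal N}) \subseteq {\rm End}_A^*({\mathcal N})$ and pick a faithful non-degenerate representation $\pi_0$ of ${\rm K}_A({\mathcal N})$ on a Hilbert space $H_0$. To this I add a second summand $H_r$ that detects $r$, and then set $H = H_0 \oplus H_r$ with $\pi = \pi_0 \oplus \pi_r$. Faithfulness of $\pi$ is then inherited from $\pi_0$. The work is to arrange that $\pi_r$ is non-degenerate for ${\rm K}_A({\mathcal N})$ while $\pi_r|_{{\rm K}_A({\mathcal M})}=0$. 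This makes the restriction to ${\rm K}_A({\mathcal M})$ degenerate on $H$, because its essential space misses $H_r$. It remains faithful on ${\rm K}_A({\mathcal M})$ via $\pi_0$ and Proposition \ref{compact_embed}.

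To construct $\pi_r$, take a state $\varphi$ of $A$ with $\varphi(r(y)r(y)^*)\neq 0$ for some $y$; such a state exists because $r\neq 0$. Form the Hilbert space $H_r$ as the completion of ${\mathcal N}\odot_A (A/N_\varphi)$, i.e.\ the interior tensor product ${\mathcal N}\otimes_A H_\varphi$ with respect to the GNS representation of $\varphi$. Let $\pi_r(T)(x\otimes\xi)=T(x)\otimes \xi$; since $T$ acts from the right in the paper's convention, I would rewrite this accordingly. The induced representation of ${\rm K}_A({\mathcal N})$ on ${\mathcal N}\otimes_A H_\varphi$ is non-degenerate, but it does not vanish on ${\rm K}_A({\mathcal M})$. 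So instead I would use $r$ to cut down to a subspace. Consider the vectors $v_x := r(x)$ paired against $\varphi$, that is, the semi-inner product $\langle x, y\rangle_r := \varphi(r(x)r(y)^*)$ on ${\mathcal N}$. Let $H_r$ be its Hausdorff completion, and let $\theta_{u,w}\in{\rm K}_A({\mathcal N})$ act by $x\mapsto \langle x,u\rangle w$. Since $r(\langle x,u\rangle w)=\langle x,u\rangle r(w)$, the operator $\pi_r(\theta_{u,w})[x] := [\langle x,u\rangle w]$ is bounded for $\langle\cdot,\cdot\rangle_r$. This follows from $\langle x,u\rangle r(w) r(w)^*\langle u,x\rangle \le \|r\|^2\|w\|^2\langle x,u\rangle\langle u,x\rangle$ together with the Cauchy--Schwarz inequality in ${\mathcal N}$, which bounds $\varphi(\langle x,u\rangle\langle u,x\rangle)$. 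This is the step I expect to be the main obstacle. The estimate only gives boundedness relative to $\varphi(\langle x,x\rangle)$, not relative to $\varphi(r(x)r(x)^*)$. I would therefore first try to deal with this by realizing $H_r$ as the closure of the range of the map $[x]\mapsto$ image of $r(x)$ in $H_\varphi$. The action is then transported via the formula $\pi_r(\theta_{u,w})\,\Lambda_\varphi(a)=\Lambda_\varphi(r(\,\cdot\,))$, which I would define on $\Lambda_\varphi(\langle x,u\rangle) \mapsto \Lambda_\varphi(\langle x,u\rangle r(w))$. Put differently, I would realize $\pi_r(\theta_{u,w})=|\Lambda_\varphi(r(w))\rangle\langle \Lambda_\varphi(u\text{-coefficient})|$, a rank-one-type operator in the GNS space. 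Here unitality of $A$ is used so that $\Lambda_\varphi(1)$ exists and $r$ extends $A$-linearly to the tensor product ${\mathcal N}\otimes_A H_\varphi \to H_\varphi$.

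Once $\pi_r$ is a $*$-representation, vanishing on ${\rm K}_A({\mathcal M})$ is immediate. For $u,w\in{\mathcal M}$, the output lies in $\Lambda_\varphi(r({\mathcal M}))=0$, and the adjoint relation handles the input. Non-zeroness comes from the choice of $\varphi$ and $y$, since $\pi_r(\theta_{y,y})\neq 0$. Non-degeneracy is obtained by replacing $H_r$ by the essential subspace of $\pi_r({\rm K}_A({\mathcal N}))$, which is nonzero. Finally, I would verify degeneracy explicitly: $\overline{\pi({\rm K}_A({\mathcal M}))H}\subseteq H_0\oplus 0\neq H$.
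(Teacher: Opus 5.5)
\paragraph{Gap: the summand $\pi_r$ you need does not exist.}
The problem is the requirement you place on the extra summand: a non-zero representation $\pi_r$ of ${\rm K}_A({\mathcal N})$ with $\pi_r|_{{\rm K}_A({\mathcal M})}=0$. In general no such representation exists.

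${\rm K}_A({\mathcal M})$ is only a hereditary subalgebra, so ${\rm Ker}(\pi_r)$ would have to contain the closed two-sided ideal that ${\rm K}_A({\mathcal M})$ generates. With $\theta_{u,w}(z)=\langle z,u\rangle w$ one computes $\theta_{x,y}\circ\theta_{m,m'}\circ\theta_{u,v}=\theta_{u,ay}$ with $a=\langle v,m\rangle\langle m',x\rangle$. Hence this ideal contains every $\theta_{u,ay}$ with $a$ in the closed span of $\langle{\mathcal N},{\mathcal M}\rangle\langle{\mathcal M},{\mathcal N}\rangle$.

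Suppose $\mathcal M$ is full. This is the case in the Kaad--Skeide example, and the paper notes it can always be arranged by adding a summand $A$. Then this span is $A\ni 1_A$, the ideal is all of ${\rm K}_A({\mathcal N})$, and $\pi_r=0$.

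So the obstacle you flagged is a genuine obstruction, not a boundedness technicality. ${\rm Ker}(r)$ is not invariant under ${\rm K}_A({\mathcal N})$: for $m\in{\mathcal M}$ one has $\theta_{m,w}(m)=\langle m,m\rangle w$ and $r(\langle m,m\rangle w)=\langle m,m\rangle r(w)$. Therefore $\theta_{u,w}$ does not descend to your $H_r$. Whatever the precise form of the rank-one recipe $|\Lambda_\varphi(r(w))\rangle\langle\cdots|$, it cannot be a non-zero $*$-homomorphism vanishing on ${\rm K}_A({\mathcal M})$.

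\paragraph{How to repair it.}
Degeneracy needs much less: one non-zero vector orthogonal to $\overline{\pi({\rm K}_A({\mathcal M}))H}$. The functional $r$ supplies this at the level of a positive functional rather than a representation.

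\begin{itemize}
\item For $T\in{\rm K}_A({\mathcal N})$, let $g_T\in{\mathcal N}$ be the element with $r(T(x))=\langle x,g_T\rangle$. For $T=\theta_{u,w}$ take $g_T=r(w)^*u$; in general use continuity, since $\|g_T\|\le\|r\|\,\|T\|$.
\item Put $\omega(T)=\varphi(r(g_T)^*)$, so that $\omega(\theta_{u,w})=\varphi(r(u)^*r(w))$.
\item $\omega$ is linear, and $\omega(T^*T)=\varphi(\langle g_{T^*},g_{T^*}\rangle)\ge 0$ because $g_{T^*T}=T^*(g_{T^*})$.
\item $\omega$ vanishes on ${\rm K}_A({\mathcal M})$ because $r|_{\mathcal M}=0$.
\item $\omega(\theta_{y,y})=\varphi(r(y)^*r(y))\neq 0$ for a suitable state $\varphi$ and some $y$, since $r\neq 0$.
\end{itemize}

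In the GNS representation, $\pi_\omega(b)\xi_\omega=0$ for all $b\in{\rm K}_A({\mathcal M})$, since $\omega(b^*b)=0$. As ${\rm K}_A({\mathcal M})$ is self-adjoint, $\xi_\omega\perp\pi_\omega({\rm K}_A({\mathcal M}))H_\omega$. Hence $\pi_0\oplus\pi_\omega$ is faithful and non-degenerate on ${\rm K}_A({\mathcal N})$, faithful on ${\rm K}_A({\mathcal M})$ via $\pi_0$, and degenerate on ${\rm K}_A({\mathcal M})$.

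This is the correct form of the idea you discarded. $\omega$ is the vector functional of the vector determined by $r$ in the induced representation on ${\mathcal N}\otimes_A H_\varphi$. That representation never needed to vanish on ${\rm K}_A({\mathcal M})$; it only needed to fail to be non-degenerate on it.

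\paragraph{Comparison with the paper.}
The paper gives no argument and only cites Kaad--Skeide. Note also that the conclusion as stated already follows from ${\mathcal M}\neq{\mathcal N}$. In that case ${\rm K}_A({\mathcal M})$ is a proper hereditary subalgebra of ${\rm K}_A({\mathcal N})$, so its open support projection in ${\rm K}_A({\mathcal N})^{**}$ is not $1$. It therefore acts degenerately in the universal representation.
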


For a full Hilbert $A$-module over a C*-algebra $A$, the linking algebra $L({\mathcal M})$ is defined as 
\begin{equation*}
    L({\mathcal M}) = \left( \begin{array}{cc} A & {\mathcal M} \\ \overline{{\mathcal M}} & {\rm K}_A({\mathcal M}) \end{array} \right)
\end{equation*}
with matricially thought actions and operations. It is a C*-algebra.   

\begin{cor}
Admit the situation described at Proposition \ref{compact_repres}.

\begin{itemize}
\item[(i)] Any approximate identity of the C*-algebra ${\rm K}_A({\mathcal M})$ is not an approximate identity of the C*-algebra ${\rm K}_A({\mathcal N})$.
\item[(ii)] Any bounded adjointable on $\mathcal N$ operator $\theta_{x,x}$ with $x \in {\mathcal N} \setminus {\mathcal M}$ is not a bounded adjointable operator on $\mathcal M$. Moreover, a bounded adjointable operator $T$ on $\mathcal N$ with the property $T \big|_{\mathcal M} = 0$ has to be the zero operator. 
\item[(iii)] The strict limit of any approximate identity of ${\rm K}_A({\mathcal M})$, i.e.~the identity operator on $\mathcal M$, can not be identified with any bounded (adjointable) $A$-linear operator on $\mathcal N$ in that specific faithful $*$-representation of ${\rm K}_A({\mathcal N})$. 
\item[(iv)] Let $\mathcal N$ and $\mathcal M$ be a full Hilbert $A$-modules. There exists a faithful non-degenerate $*$-representation of the linking algebra $L({\mathcal N})$ which is only faithful degenerate for the $*$-isometrically embedded copy of the linking algebra $L({\mathcal M})$. In particular, $\mathcal N$ is faithfully non-degenerately represented as a Hilbert $A$-module in that specific $*$-representation, but $\mathcal M$ is only faithfully degeneratedly represented as a Hilbert $A$-submodule there. 
\item[(v)] Let $\mathcal N$ and $\mathcal M$ be a full Hilbert $A$-modules. The C*-algebras ${\rm K}_A({\mathcal M})$ and ${\rm K}_A({\mathcal N})$ are strongly Morita equivalent. 
\end{itemize}
\end{cor}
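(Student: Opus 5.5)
We prove the five items in turn. Proposition \ref{compact_embed} places ${\rm K}_A({\mathcal M})$ as a hereditary C*-subalgebra of ${\rm K}_A({\mathcal N})$. Proposition \ref{compact_repres} gives a faithful non-degenerate representation $\pi$ of ${\rm K}_A({\mathcal N})$ on $H$ whose restriction to ${\rm K}_A({\mathcal M})$ is degenerate. Concretely, the essential subspace $H_0=\overline{\pi({\rm K}_A({\mathcal M}))H}$ is a proper closed subspace of $H$.

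\emph{Item (i).} Let $(u_\lambda)$ be an approximate identity of ${\rm K}_A({\mathcal M})$. Then $\pi(u_\lambda)$ converges strongly to the projection $p_0$ onto $H_0$. If $(u_\lambda)$ were also an approximate identity of ${\rm K}_A({\mathcal N})$, then non-degeneracy of $\pi$ would force $\pi(u_\lambda)\to 1_H$ strongly. Hence $p_0=1_H$, contradicting $H_0\neq H$.

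\emph{Item (iii).} The identity operator of ${\mathcal M}$ is the strict limit of $(u_\lambda)$, and its image is $p_0$. Suppose $p_0=\pi''(T)$ for some $T\in{\rm End}_A^*({\mathcal N})=M({\rm K}_A({\mathcal N}))$, with $\pi''$ the extension of $\pi$ to the multiplier algebra. Then $T$ is a projection with $T|_{\mathcal M}={\rm id}$. Since $\pi''$ is faithful on $M({\rm K}_A({\mathcal N}))$ and $p_0\neq 1_H$, we get $T\neq{\rm id}$. So ${\rm id}-T$ is a nonzero adjointable operator vanishing on ${\mathcal M}$, and by Lemma \ref{case_selfadjoint} its kernel is $({\rm Im}({\rm id}-T))^\perp\supseteq{\mathcal M}$. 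This gives ${\rm Im}({\rm id}-T)\subseteq{\mathcal M}^\perp=\{0\}$, a contradiction. The same argument gives the second claim of (ii): if $T$ is adjointable and $T|_{\mathcal M}=0$, then ${\rm Ker}(T)={\rm Im}(T^*)^\perp\supseteq{\mathcal M}$. Hence ${\rm Im}(T^*)\subseteq{\mathcal M}^\perp=\{0\}$, so $T^*=0$ and $T=0$.

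\emph{First claim of (ii).} Take $x\in{\mathcal N}\setminus{\mathcal M}$. The operator $\theta_{x,x}(y)=\langle y,x\rangle x$ maps ${\mathcal M}$ into $Ax$. If it restricted to an operator on ${\mathcal M}$, then $\langle m,x\rangle x\in{\mathcal M}$ for all $m\in{\mathcal M}$. I expect this step to be the delicate one, because it may fail for special $x$. For instance, if $\langle{\mathcal M},x\rangle=0$ then $x\in{\mathcal M}^\perp=\{0\}$, so that case is excluded; but if $\langle {\mathcal M},x\rangle x\subseteq{\mathcal M}$ holds for other reasons, the claim breaks. I would therefore read the claim as ``in general'' and argue through the representation: it suffices to find a single $x$ for which $\pi(\theta_{x,x})H\not\subseteq H_0$, or, if the claim is meant for all $x$, to use the hereditary property of ${\rm K}_A({\mathcal M})$ in ${\rm K}_A({\mathcal N})$. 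I expect the statement may need weakening at this point.

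\emph{Item (iv).} Realize $L({\mathcal N})$ as ${\rm K}_A(A\oplus{\mathcal N})$; this works because ${\mathcal N}$ is full, so $A\cong{\rm K}_A(A)$. Apply Proposition \ref{compact_repres} to the pair $A\oplus{\mathcal M}\subset A\oplus{\mathcal N}$. Its hypotheses hold: $(A\oplus{\mathcal M})^\perp=\{0\}$, and $0\oplus r$ is a nonzero functional vanishing on $A\oplus{\mathcal M}$. Propositions \ref{compact_embed} and \ref{compact_repres} then give the faithful representation of $L({\mathcal N})$ that is degenerate on $L({\mathcal M})$. The off-diagonal corners yield the statements about ${\mathcal N}$ and ${\mathcal M}$.

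\emph{Item (v).} Fullness makes ${\rm K}_A({\mathcal M})$ and ${\rm K}_A({\mathcal N})$ both strongly Morita equivalent to $A$, via the imprimitivity bimodules ${\mathcal M}$ and ${\mathcal N}$. Strong Morita equivalence is transitive, so the two algebras are strongly Morita equivalent to each other.
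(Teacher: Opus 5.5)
Your treatment of (i), of the second claim of (ii), and of (v) follows the paper's proof: an approximate unit converges strongly to the projection onto the essential subspace rather than to $1_H$; ${\rm Ker}(T)={\rm Im}(T^*)^\perp\supseteq{\mathcal M}$ forces $T^*=0$; and strong Morita equivalence is transitive through $A$. Your (iii) is the paper's terse reduction to ``adjointable and zero on ${\mathcal M}$ implies zero'', applied to ${\rm id}-T$, with the details supplied. It needs two additions.

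\emph{Justify $T|_{\mathcal M}={\rm id}$.} From $\pi(T\circ k)=p_0\pi(k)=\pi(k)$ for $k\in{\rm K}_A({\mathcal M})$ and faithfulness, you get $T(\langle y,m_2\rangle m_1)=\langle y,m_2\rangle m_1$ for $y,m_1,m_2\in{\mathcal M}$. These elements span a dense subspace of ${\mathcal M}$.

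\emph{Cover merely bounded $T$.} The statement says ``bounded (adjointable)''. If $p_0$ were the image of a merely bounded $T$, i.e.\ of a left multiplier of ${\rm K}_A({\mathcal N})$, then $p_0=p_0^*$ makes it also a right multiplier, hence a multiplier, so $T$ is adjointable after all. This reduction is not cosmetic. For non-adjointable $T$, the implication ``${\rm id}-T$ vanishes on ${\mathcal M}$, hence ${\rm id}-T=0$'' is exactly what fails in the Kaad--Skeide situation.

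For (iv) you take a different and cleaner route than the paper. The paper chooses a faithful representation of $A$ on some $K$ and speaks of ``the resulting'' representation of $L({\mathcal N})$ on $K\oplus H$. That requires $K$ to be compatible with the given representation of ${\rm K}_A({\mathcal N})$ on $H$ (e.g.\ induced from $H$ via ${\mathcal N}$), which the paper leaves implicit. You instead identify $L({\mathcal N})$ with ${\rm K}_A(A\oplus{\mathcal N})$ and apply Proposition~\ref{compact_repres} to $A\oplus{\mathcal M}\subset A\oplus{\mathcal N}$ with the functional $0\oplus r$. The hypotheses hold as you say. Since the corner projection onto $A$ lies in both linking algebras, the degeneracy sits in the ${\rm K}_A({\mathcal M})$-corner, which gives the module statements. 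The price is that your representation need not extend the one used in (i)--(iii), which (iv) does not demand.

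Your doubt about the first claim of (ii) is justified: as stated it is false, so no argument can close this step. The paper offers nothing beyond ``Obviously, \dots\ $\theta_{n,n}$ with $n\in{\mathcal N}\setminus{\mathcal M}$ is not in ${\rm End}_{\mathbb C}(H_1)$''.

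\emph{Counterexample.} Take a Kaad--Skeide pair ${\mathcal M}_0\subset{\mathcal N}_0$ with functional $r_0$ over $A=C([0,1])$. Let $I=\{f\in A: f(1/2)=0\}$, and set ${\mathcal N}=A\oplus{\mathcal N}_0$, ${\mathcal M}=I\oplus{\mathcal M}_0$, $r(a,n)=r_0(n)$. Because $I$ is essential and ${\mathcal M}_0^\perp=\{0\}$, we get ${\mathcal M}^\perp=\{0\}$, and $r\neq 0$ vanishes on ${\mathcal M}$. Yet $x=(1,0)\notin{\mathcal M}$ and $\theta_{x,x}(f,m)=(f,0)$. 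So $\theta_{x,x}$ restricts to an orthogonal projection of ${\mathcal M}$, which is an adjointable operator on ${\mathcal M}$.

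\emph{What is true} is the existential version, and its proof needs no representation. Suppose $\theta_{u,u}({\mathcal M})\subseteq{\mathcal M}$ for all $u\in{\mathcal N}$. By polarization, $\theta_{x,y}({\mathcal M})\subseteq{\mathcal M}$ for all $x,y\in{\mathcal N}$, so $\langle m,y\rangle r(x)=r(\langle m,y\rangle x)=0$ for $m\in{\mathcal M}$. Hence $\langle m,r(x)^*y\rangle=\langle m,y\rangle r(x)=0$, i.e.\ $r(x)^*y\in{\mathcal M}^\perp=\{0\}$. Taking $y=x$ gives $r(x)^*r(x)=0$, so $r=0$, a contradiction. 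Since $\theta_{u,u}$ preserves ${\mathcal M}$ when $u\in{\mathcal M}$, some $x\in{\mathcal N}\setminus{\mathcal M}$ has $\theta_{x,x}({\mathcal M})\not\subseteq{\mathcal M}$. That is what you should prove in place of the universal claim.
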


\begin{proof}
To prove (i), we consider the faithful non-degenerate $*$-representation of ${\rm K}_A({\mathcal N})$ described at Proposition \ref{compact_repres}. Let $H=H_1 \oplus H_2$ such that ${\rm K}_A({\mathcal N})$ is faithfully $*$-represented on $H$, ${\rm K}_A({\mathcal M})$ is faithfully $*$-represented on $H_1$ and ${\rm dim}(H_2) \not=0$, where ${\rm K}_A({\mathcal M})$ acts as the set of zero operators on $H_2$. Approximate identities of faithfully $*$-represented C*-algebras are convergent in the strong operator topology. Consequently, any approximate identity of ${\rm K}_A({\mathcal M})$ converges in the strong operator topology to the orthogonal projection $P: H \to H_1$, whereas any approximate identity of ${\rm K}_A({\mathcal N})$ converges in the strong operator topology to the identity operator on $H$, \cite[p.~142]{Murphy}. 

Consequently, this $*$-representation extends to faithful $*$representations of the respective multiplier C*-algebras ${\rm End}_A^*({\mathcal M})$ in ${\rm End}_{\mathbb C}(H_1)$ and ${\rm End}_A^*({\mathcal N})$ in ${\rm End}_{\mathbb C}(H)$. Here we use the fact that the multiplier algebra (and the left/right multiplier algebras) of any C*-algebra can be calculated in any of its faithful $*$-representations on Hilbert spaces $H$ as the respective Banach subalgebras of ${\rm End}_{\mathbb C}(H)$, \cite[Ch.~3]{Murphy}, \cite{Pedersen_1984}. Also, the different multiplier structures are identified with respective structures of bounded module operators following \cite[Thms.~1.5, 1.6, 1.7]{Lin}. Obviously, any bounded adjointable operator $\theta_{n,n}$ with $n \in {\mathcal N} \setminus {\mathcal M}$ is not in ${\rm End}_{\mathbb C}(H_1)$, and the assumption that $P = {\rm id}_{H_1}$ would be in ${\rm End}_A({\mathcal N}) \subseteq {\rm End}_{\mathbb C}(H)$ and, in particular, in ${\rm End}_A^*({\mathcal N}) \subseteq {\rm End}_{\mathbb C}(H)$  contradicts to ${\mathcal M}^\bot \{ 0 \}$ in $\mathcal N$. This is the first assertion of (ii).

To see the second assertion of (ii), $\mathcal M$ in the kernel of $T$ forces the orthogonal complement of the kernel of $T$ to be equal to $\{ 0 \}$. By Lemma \ref{case_selfadjoint} the biorthogonal complement of the image of $T^*$ equals the kernel of $T$ by supposition. Hence, the image of $T^*$ consists only of the zero element, and $T^*=0$. So $T=0$.


Continuing, $T \in {\rm End}_A^*({\mathcal N})$ is zero on $\mathcal M$ if and only if $T=0$, since $\langle T( \cdot ), n \rangle = \langle \cdot, T^*(n) \rangle$ for any $n \in \mathcal N$ and ${\mathcal M}^\bot = \{ T^*(n) : n \in {\mathcal N} \} = \{ 0 \}$. This shows (iii).

To construct a faithful $*$-representation as described in (iv), we select a faithful $*$-representation of $A$ on a Hilbert space $K$ and consider the resulting $*$-representation of the linking algebra $L({\mathcal N})$ on $K \oplus H$. Then $\mathcal N$ is faithfully isometrically represented as a Hilbert $A$-module in ${\rm End}_{\mathbb C}(K,H)$, together with a faithful representation of $\mathcal M$ in ${\rm End}_{\mathbb C}(K,H_1)$. 

The strong Morita equivalence of ${\rm K}_A({\mathcal M})$ and ${\rm K}_A({\mathcal N})$ follows for full pairs of Hilbert $A$-modules ${\mathcal M} \subset \mathcal N$  transitively from the strong Morita equivalence of each of these C*-algebras with $A$ by construction.
\end{proof}


\section*{Acknowledgement}
Unfortunately, the proof of Lemma 3.3 of our first arxiv version contains the implicit assumption that the projection P and the operator $S$ commute, which is not the case for certain non-zero non-self-adjoint operators $S$. The authors wish to thank Jens Kaad for pointing out the insufficient argument and for his efforts to improve it, and Orr Shalit, Vladimir M. Manuilov and Michael Skeide for discussions. We apologize for getting  the notion 'modular'  wrong, as it was newly introduced and used in the recent preprint \cite{Sk_2025} by Michael Skeide.



\end{document}